\newcommand{\leqnomode}{\tagsleft@true}
 \newtheorem{theorem}{Theorem}
 \newtheorem{corollary}[theorem]{Corollary}
 \newtheorem{lemma}[theorem]{Lemma}
 \newtheorem{proposition}[theorem]{Proposition}
 \theoremstyle{definition}
 \theoremstyle{remark}
 \newtheorem{remark}[theorem]{Remark}
 \numberwithin{equation}{section}
 \numberwithin{theorem}{section}
\begin{document}

\title[Optimal extensions for $p$-th power factorable operators]
{Optimal extensions for $p$-th power factorable operators}

\author[O.\ Delgado]{O.\ Delgado}
\address{Departamento de Matem\'atica Aplicada I, E.\ T.\ S.\ de Ingenier\'ia
de Edificaci\'on, Universidad de Sevilla, Avenida de Reina Mercedes,
4 A,  Sevilla 41012, Spain}
\email{\textcolor[rgb]{0.00,0.00,0.84}{olvido@us.es}}

\author[E.\ A.\ S\'{a}nchez P\'{e}rez]{E.\ A.\ S\'{a}nchez P\'{e}rez}
\address{Instituto Universitario de Matem\'atica Pura y Aplicada,
Universitat Polit\`ecnica de Val\`encia, Camino de Vera s/n, 46022
Valencia, Spain.}
\email{\textcolor[rgb]{0.00,0.00,0.84}{easancpe@mat.upv.es}}

\subjclass[2010]{46E30, 46G10, 47B38.}

\keywords{Integration with respect to a vector measure defined on a
$\delta$-ring, optimal extension of an operator, $p$-th power
factorable operator, quasi-Banach function space.}

\thanks{The first author gratefully acknowledge the support of the Ministerio de Econom\'{\i}a y Competitividad
(project \#MTM2012-36732-C03-03) and the Junta de Andaluc\'{\i}a
(projects FQM-262 and FQM-7276), Spain.}
\thanks{The second author acknowledges with thanks the support of the Ministerio de Econom\'{\i}a y Competitividad
(project \#MTM2012-36740-C02-02), Spain.}

\date{\today}

\maketitle


\begin{abstract}
Let $X(\mu)$ be a function space related to a measure space
$(\Omega,\Sigma,\mu)$ with $\chi_\Omega\in X(\mu)$ and let $T\colon
X(\mu)\to E$ be a Banach space valued operator. It is known that if
$T$ is $p$-th power factorable then the largest function space to
which $T$ can be extended preserving $p$-th power factorability is
given by the space $L^p(m_T)$ of $p$-integrable functions with
respect to $m_T$, where $m_T\colon\Sigma\to E$ is the vector measure
associated to $T$ via $m_T(A)=T(\chi_A)$. In this paper we extend
this result by removing the restriction $\chi_\Omega\in X(\mu)$. In
this general case, by considering $m_T$ defined on a certain
$\delta$-ring, we show that the optimal domain for $T$ is the space
$L^p(m_T)\cap L^1(m_T)$. We apply the obtained results to the
particular case when $T$ is a map between sequence spaces defined by
an infinite matrix.
\end{abstract}



\section{Introduction}

Although the concept of $p$-th power factorable operator have
previously been used as a tool in operator theory, it was introduced
explicitly in \cite[\S\,5]{okada-ricker-sanchezperez}. Given a
measure space $(\Omega,\Sigma,\mu)$ and a Banach function space
$X(\mu)$ of ($\mu$-a.e.\ classes of) $\Sigma$-measurable functions
such that $\chi_\Omega\in X(\mu)$, for $1\le p<\infty$, a Banach
space valued operator $T\colon X(\mu)\to E$ is $p$-th power
factorable if there is a continuous extension of $T$ to the
$\frac{1}{p}$-th power space $X(\mu)^{\frac{1}{p}}$ of $X(\mu)$.
This is equivalent to the existence of a constant $C>0$ satisfying
that
$$
\Vert T(f)\Vert\le
C\,\Vert\,|f|^{\frac{1}{p}}\,\Vert_{X(\mu)}^p=C\,\Vert
f\Vert_{X(\mu)^{\frac{1}{p}}}
$$
for all $f\in X(\mu)$. The main characterization of this class of
operators establishes that any of them can be extended to an space
$L^p$ of a vector measure $m_T\colon\Sigma\to E$ associated to $T$
via $m_T(A)=T(\chi_A)$ and the extension is maximal. Note that the
condition $\chi_\Omega\in X(\mu)$ is necessary for a correct
definition of $p$-th power factorable operator (i.e.\ $X(\mu)\subset
X(\mu)^{\frac{1}{p}}$) and for $m_T$ to be well defined.

Several applications are shown also in
\cite[\S\,6-7]{okada-ricker-sanchezperez}, mainly in factorization
of operators through spaces $L^q(\mu)$ (Maurey-Rosenthal type
theorems) and in harmonic analysis (Fourier transform and
convolution operators). After that, $p$-th power factorable
operators have turned out to be a useful tool for the study of
different problems in mathematical analysis, regarding for example
Banach space interpolation theory
\cite{delcampo-fernandez-galdames-mayoral-naranjo}, differential
equations \cite{galdames1}, description of maximal domains for
several classes of operators \cite{galdames-sanchezperez1},
factorization of kernel operators \cite{galdames-sanchezperez2} or
adjoint operators \cite{galdames2}.

The requirement $\chi_\Omega\in X(\mu)$ excludes basic spaces as
$L^q(0,\infty)$ or $\ell^q$. Although these spaces can be
represented as spaces satisfying the needed requirement (for
instance $L^q(0,\infty)$ is isometrically isomorphic to
$L^q(e^{-x}dx)$ via the multiplication operator induced by
$e^{\frac{x}{q}}$), to use such a representation provides some kind
of factorization for $T$ but not genuine extensions.

The aim of this paper is to extend the results on maximal extensions
of $p$-th power factorable operators to quasi-Banach spaces $X(\mu)$
which do not necessary contain $\chi_\Omega$. Also we will consider
$p$ to be any positive number removing the restriction $p\ge1$. The
first problem is the definition of $p$-th power factorable operator,
as in general the containment $X(\mu)\subset X(\mu)^{\frac{1}{p}}$
does not hold. This can be solved by replacing
$X(\mu)^{\frac{1}{p}}$ by the sum $X(\mu)^{\frac{1}{p}}+X(\mu)$. The
second problem is the definition of the vector measure $m_T$
associated to $T$. The technique to overcome this obstacle consists
of considering $m_T$ defined on the $\delta$-ring
$\Sigma_{X(\mu)}=\big\{A\in\Sigma:\,\chi_A\in X(\mu)\big\}$ instead
of the $\sigma$-algebra $\Sigma$. We will see that actually no
topology is needed on $X(\mu)$ to extend $T\colon X(\mu)\to E$, it
suffices an ideal structure on $X(\mu)$ and a certain property on
$T$ which relates the $\mu$-a.e.\ pointwise order of $X(\mu)$ and
the weak topology of $E$. This property, called order-w continuity,
is the minimal condition for $m_T$ to be a vector measure.

The paper is organized as follows. Section \ref{SEC: Preliminaries}
is devoted to establish the notation and to state the results on
ideal function spaces, quasi-Banach function spaces and integration
with respect to a vector measure defined on a $\delta$-ring, which
will be use along this work. For the aim of completeness, we include
the proof of some relevant facts. In Section \ref{SEC:
orderwCont-ifs} we show that every order-w continuous operator $T$
defined on an ideal function space $X(\mu)$, can be extended to the
space $L^1(m_T)$ of integrable functions with respect to $m_T$ and
this space is the largest one to which $T$ can be extended as an
order-w continuous operator (Theorem \ref{THM:
OrderwContinuousFactorization}). Section \ref{SEC:
orderwCont-pthPowerFactor-ifs} deals with operators $T$ which are
$p$-th power factorable with an order-w continuous extension, that
is, there is an order-w continuous extension of $T$ to the space
$X(\mu)^{\frac{1}{p}}+X(\mu)$. We prove that the space $L^p(m_T)\cap
L^1(m_T)$ is the optimal domain for $T$ preserving the property of
being $p$-th power factorable with an order-w continuous extension
(Theorem \ref{THM: orderwCont-pthPowerFactor-Factorization}). In
Sections \ref{SEC: Continuos-quasiBfs} and \ref{SEC:
orderwCont-pthPowerFactor-quasiBfs} we endow $X(\mu)$ with a
topology (namely, $X(\mu)$ will be a $\sigma$-order continuous
quasi-Banach function space) and consider $T$ to be continuous.
Results on maximal extensions analogous to the ones of the previous
sections are obtain for continuity instead of order-w continuity
(Theorems \ref{THM: ContinuousFactorization} and \ref{THM:
Cont-pthPowerFactor-Factorization}). Finally, as an application of
our results, in the last section we study when an infinite matrix of
real numbers defines a continuous linear operator from $\ell^p$ into
any given sequence space.


\section{Preliminaries}\label{SEC: Preliminaries}

\subsection{Ideal function spaces}

Let $(\Omega,\Sigma)$ be a fixed measurable space. For a measure
$\mu\colon\Sigma\to[0,\infty]$, we denote by $L^0(\mu)$ the space of
all ($\mu$-a.e.\ classes of) $\Sigma$-measurable real valued
functions on $\Omega$. Given two set functions
$\mu,\lambda\colon\Sigma\to[0,\infty]$ we will write $\lambda\ll\mu$
if $\mu(A)=0$ implies $\lambda(A)=0$. We will say that $\mu$ and
$\lambda$ are \emph{equivalent} if $\lambda\ll\mu$ and
$\mu\ll\lambda$. In the case when $\mu$ and $\lambda$ are two
measures with $\lambda\ll\mu$, the map $[i]\colon L^0(\mu)\to
L^0(\lambda)$ which takes a $\mu$-a.e.\ class in $L^0(\mu)$
represented by $f$ into the $\lambda$-a.e.\ class represented by the
same $f$, is a well defined linear map. In order to simplify
notation $[i](f)$ will be denoted again as $f$. Note that if
$\lambda$ and $\mu$ are equivalent then $L^0(\mu)=L^0(\lambda)$ and
$[i]$ is the identity map $i$.

An \emph{ideal function space} (briefly, i.f.s.) is a vector space
$X(\mu)\subset L^0(\mu)$ satisfying that if $f\in X(\mu)$ and $g\in
L^0(\mu)$ with $|g|\le|f|$ $\mu$-a.e.\ then $g\in X(\mu)$. We will
say that $X(\mu)$ has the \emph{$\sigma$-property} if there exists
$(\Omega_n)\subset\Sigma$ such that $\Omega=\cup \Omega_n$ and
$\chi_{\Omega_n}\in X(\mu)$ for all $n$. For instance, this happens
if there is some $g\in X(\mu)$ with $g>0$ $\mu$-a.e.

\begin{lemma}\label{LEM: Xmu-a.e.denseL0}
Let $X(\mu)$ be an i.f.s.\ satisfying the $\sigma$-property. For
every $\Sigma$--measurable function $f\colon\Omega\to[0,\infty)$
there exists $(f_n)\subset X(\mu)$ such that $0\le f_n\uparrow f$
pointwise.
\end{lemma}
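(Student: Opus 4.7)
The plan is to combine three standard truncation ideas: bound $f$ from above pointwise, cut off outside a set of finite ``$X(\mu)$-measure'', and then let the bounds and the set both grow to exhaust $\Omega$.

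More concretely, let $(\Omega_n)\subset\Sigma$ be the sequence provided by the $\sigma$-property, and set $A_n=\bigcup_{k=1}^n\Omega_k$, so $A_n\uparrow\Omega$. My first step would be to check that $\chi_{A_n}\in X(\mu)$: since $X(\mu)$ is a vector space, $\sum_{k=1}^n\chi_{\Omega_k}\in X(\mu)$, and since $0\le\chi_{A_n}\le\sum_{k=1}^n\chi_{\Omega_k}$ pointwise, the ideal property yields $\chi_{A_n}\in X(\mu)$. Then I would define
\[
f_n:=\min(f,n)\,\chi_{A_n}, \qquad n\ge 1.
\]

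The second step is membership: clearly $f_n$ is $\Sigma$-measurable and $0\le f_n\le n\,\chi_{A_n}\in X(\mu)$, so the ideal property forces $f_n\in X(\mu)$.

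The third step is verifying $f_n\uparrow f$ pointwise. Monotonicity is immediate from $A_n\subset A_{n+1}$ and $\min(f,n)\le\min(f,n+1)$. For the limit, fix $\omega\in\Omega$; there exists $N$ with $\omega\in\Omega_N\subset A_n$ for every $n\ge N$, and since $f(\omega)<\infty$ we have $\min(f(\omega),n)=f(\omega)$ for all $n$ large enough, so $f_n(\omega)\to f(\omega)$.

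There is no real obstacle here; the only point that must be handled carefully is that the $\sigma$-property is stated with a \emph{cover} rather than an increasing sequence, which is why I replace $\Omega_n$ by the increasing family $A_n$ at the outset, and why I need the vector space plus ideal structure (rather than just the ideal property applied to a single $\chi_{\Omega_n}$) to ensure $\chi_{A_n}\in X(\mu)$. The finiteness of $f$ (the range is $[0,\infty)$, not $[0,\infty]$) is what makes the truncation $\min(f,n)$ stabilize pointwise to $f$.
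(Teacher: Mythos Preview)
Your proof is correct and follows essentially the same approach as the paper: truncate $f$ at level $n$ and restrict to the increasing sets $\bigcup_{k\le n}\Omega_k$, then use the ideal property with the bound $n\,\chi_{\bigcup_{k\le n}\Omega_k}$. The only cosmetic difference is that the paper writes $f_n=f\,\chi_{\{f\le n\}\cap\bigcup_{k\le n}\Omega_k}$ rather than $\min(f,n)\,\chi_{\bigcup_{k\le n}\Omega_k}$, but both yield the same bound and the same pointwise increasing limit.
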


\begin{proof}
Let $(\Omega_n)\subset\Sigma$ be the sequence given by the
$\sigma$-property of $X(\mu)$ and let $f\colon\Omega\to[0,\infty)$
be a $\Sigma$--measurable function. Taking
$A_n=\cup_{j=1}^n\Omega_j\cap \big\{\omega\in\Omega:\, f(\omega)\le
n\big\}$, we have that $f_n=f\chi_{A_n}\in X(\mu)$, as $0\le f_n\le
n\chi_{\cup_{j=1}^n\Omega_j}$ pointwise, and that $f_n\uparrow f$
pointwise.
\end{proof}

The \emph{sum} of two i.f.s.' $X(\mu)$ and $Y(\mu)$ is the space
defined as
$$
X(\mu)+Y(\mu)=\big\{f\in L^0(\mu):\, f=f_1+f_2 \ \
\mu\textnormal{-a.e.}, \ f_1\in X(\mu), \ f_2\in Y(\mu)\big\}.
$$

\begin{proposition}\label{PROP: X+Y-i.f.s.}
The sum $X(\mu)+Y(\mu)$ of two i.f.s.' is an i.f.s.
\end{proposition}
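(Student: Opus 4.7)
The plan is to verify the two defining properties of an ideal function space for $X(\mu)+Y(\mu)$: that it is a vector subspace of $L^0(\mu)$ and that it is order-closed in the sense of the ideal property. The first is entirely routine; the content of the proposition is the second. The key idea is to produce, given any $g$ dominated by $f=f_1+f_2\in X(\mu)+Y(\mu)$, an explicit measurable splitting $g=g_1+g_2$ with $|g_i|$ controlled by a scalar multiple of $|f_i|$, so that ideality of each summand does the rest.

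First, I would check that $X(\mu)+Y(\mu)$ is a vector subspace of $L^0(\mu)$. Scalar multiples of elements are handled by the fact that both $X(\mu)$ and $Y(\mu)$ are stable under scaling, and for additivity I would observe that if $f=f_1+f_2$ and $h=h_1+h_2$ are decompositions with $f_i,h_i$ in the appropriate spaces, then $(f_1+h_1)\in X(\mu)$ and $(f_2+h_2)\in Y(\mu)$ since each summand space is itself a vector space. As usual, equalities are understood $\mu$-a.e., and we may fix $\Sigma$-measurable representatives throughout.

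The heart of the proof is the ideal property. Let $f\in X(\mu)+Y(\mu)$ with fixed decomposition $f=f_1+f_2$, $f_1\in X(\mu)$, $f_2\in Y(\mu)$, and let $g\in L^0(\mu)$ satisfy $|g|\le|f|$ $\mu$-a.e. I would partition $\Omega$ into the $\Sigma$-measurable sets
$$
A=\bigl\{\omega\in\Omega:\,|f_1(\omega)|\ge|f_2(\omega)|\bigr\},\qquad B=\Omega\setminus A,
$$
and define $g_1=g\,\chi_A$ and $g_2=g\,\chi_B$. Clearly $g=g_1+g_2$ pointwise. On $A$ we have
$$
|g_1|\le|g|\le|f_1|+|f_2|\le 2|f_1|\quad\mu\text{-a.e.},
$$
so $|g_1|\le 2|f_1|$ $\mu$-a.e.\ on $\Omega$ (since $g_1\equiv 0$ off $A$); as $2f_1\in X(\mu)$ and $X(\mu)$ is an ideal, this forces $g_1\in X(\mu)$. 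The symmetric bound $|g_2|\le 2|f_2|$ $\mu$-a.e.\ gives $g_2\in Y(\mu)$. Therefore $g=g_1+g_2\in X(\mu)+Y(\mu)$, completing the argument.

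The only genuine obstacle is choosing a splitting of $g$ that is measurable and simultaneously controlled by both $|f_1|$ and $|f_2|$; the partition $\{|f_1|\ge|f_2|\}\cup\{|f_1|<|f_2|\}$ accomplishes this in one stroke. An equally valid alternative would be the continuous splitting $g_i=g\,|f_i|/(|f_1|+|f_2|)$ (set to $0$ where the denominator vanishes), which yields the sharper bound $|g_i|\le|f_i|$; either choice reduces the proposition to the ideality of $X(\mu)$ and $Y(\mu)$ individually.
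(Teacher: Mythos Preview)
Your argument is correct, but the splitting you use is genuinely different from the paper's. The paper fixes the set $A=\{\omega:|g(\omega)|\le|f_1(\omega)|\}$ and first decomposes $|g|$ as $h_1+h_2$ with $h_1=|g|\chi_A+|f_1|\chi_{\Omega\setminus A}$ and $h_2=(|g|-|f_1|)\chi_{\Omega\setminus A}$, obtaining the sharp pointwise bounds $0\le h_i\le|f_i|$; a sign adjustment then gives $g=g_1+g_2$ with $|g_i|\le|f_i|$. Your partition $\{|f_1|\ge|f_2|\}\cup\{|f_1|<|f_2|\}$ is simpler and perfectly adequate here, but it only yields $|g_i|\le 2|f_i|$. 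The point of the paper's more careful choice is that the constant-free bound $|g_i|\le|f_i|$ is reused verbatim in the proof of Proposition~\ref{PROP: X+Y-qBfs} to show that $\Vert\cdot\Vert_{X(\mu)+Y(\mu)}$ is compatible with the $\mu$-a.e.\ order (and again to build the decreasing decompositions needed for $\sigma$-order continuity). Your alternative splitting $g_i=g\,|f_i|/(|f_1|+|f_2|)$ would serve that later purpose just as well; your primary splitting, with its factor $2$, would not.
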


\begin{proof}
Let $f\in X(\mu)+Y(\mu)$ and $g\in L^0(\mu)$ be such that
$|g|\le|f|$ $\mu$--a.e. Write $f=f_1+f_2$ $\mu$-a.e.\ with $f_1\in
X(\mu)$ and $f_2\in Y(\mu)$ and denote
$A=\big\{\omega\in\Omega:\,|g(\omega)|\le|f_1(\omega)|\big\}$.
Taking $h_1=|g|\chi_A+ |f_1|\chi_{\Omega\backslash A}$ and
$h_2=(|g|-|f_1|)\chi_{\Omega\backslash A}$, we have that $|g|=h_1+
h_2$ with $h_1\in X(\mu)$ as $0\le h_1\le |f_1|$ pointwise and
$h_2\in Y(\mu)$ as $0\le h_2\le |f_2|$ $\mu$-a.e. Now, denote
$B=\big\{\omega\in\Omega:\,g(\omega)\ge0\big\}$ and take
$g_1=h_1\big(\chi_B-\chi_{\Omega\backslash B})$ and
$g_2=h_2\big(\chi_B-\chi_{\Omega\backslash B})$. Then, $g=g_1+g_2$
with $g_1\in X(\mu)$ as $|g_1|=h_1$ and $g_2\in Y(\mu)$ as
$|g_2|=h_2$. So, $g\in X(\mu)+Y(\mu)$.
\end{proof}

Let $p\in(0,\infty)$. The \emph{$p$-power} of an i.f.s.\ $X(\mu)$ is
the i.f.s.\ defined as
$$
X(\mu)^p=\big\{f\in L^0(\mu):\, |f|^p\in X(\mu)\big\}.
$$

\begin{lemma}\label{LEM: XsXtSubsetXr}
Let $X(\mu)$ be an i.f.s. For $s,t\in(0,\infty)$ and
$\frac{1}{r}=\frac{1}{s}+\frac{1}{t}$, it follows that if $f\in
X(\mu)^s$ and $g\in X(\mu)^t$ then $fg\in X(\mu)^r$. In particular,
if $\chi_\Omega\in X(\mu)$ then $X(\mu)^q\subset X(\mu)^p$ for all
$0<p<q<\infty$.
\end{lemma}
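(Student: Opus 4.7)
The plan is to reduce the first claim to Young's inequality applied pointwise, and then derive the second claim as a special case by inserting $g=\chi_\Omega$.

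For the first claim, given $f\in X(\mu)^s$ and $g\in X(\mu)^t$, I want to show $|fg|^r\in X(\mu)$. The key observation is that the hypothesis $\frac{1}{r}=\frac{1}{s}+\frac{1}{t}$ rewrites as $\frac{r}{s}+\frac{r}{t}=1$, so $s/r$ and $t/r$ are conjugate exponents (both strictly larger than $1$). Young's inequality applied to the nonnegative quantities $|f|^r$ and $|g|^r$ with these exponents gives the pointwise estimate
\begin{equation*}
|fg|^r \;=\; |f|^r\,|g|^r \;\le\; \frac{r}{s}\,|f|^s + \frac{r}{t}\,|g|^t
\end{equation*}
holding $\mu$-a.e. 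The right-hand side belongs to $X(\mu)$ because $X(\mu)$ is a vector space and both $|f|^s$ and $|g|^t$ lie in $X(\mu)$ by hypothesis. Since $|fg|^r\in L^0(\mu)$ is dominated $\mu$-a.e.\ by an element of $X(\mu)$, the ideal property of $X(\mu)$ delivers $|fg|^r\in X(\mu)$, i.e.\ $fg\in X(\mu)^r$.

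For the ``in particular'' part, suppose $\chi_\Omega\in X(\mu)$ and $0<p<q<\infty$. Define $t\in(0,\infty)$ by $\frac{1}{t}=\frac{1}{p}-\frac{1}{q}$ (positive since $p<q$). Then $\frac{1}{p}=\frac{1}{q}+\frac{1}{t}$, and $|\chi_\Omega|^t=\chi_\Omega\in X(\mu)$ shows $\chi_\Omega\in X(\mu)^t$. Applying the first part with $s=q$, $g=\chi_\Omega$ and any $f\in X(\mu)^q$ yields $f=f\chi_\Omega\in X(\mu)^p$, proving the inclusion $X(\mu)^q\subset X(\mu)^p$.

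There is really no significant obstacle: the only thing to watch is that Young's inequality requires conjugate exponents strictly greater than $1$, which is automatic here because $r<s$ and $r<t$ by the relation $\frac{1}{r}=\frac{1}{s}+\frac{1}{t}$. The ideal structure of $X(\mu)$ then does all the remaining work without any further hypothesis (such as a norm or a topology).
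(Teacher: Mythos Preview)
Your proof is correct and follows essentially the same approach as the paper: the paper also invokes the pointwise inequality $a^rb^r\le\frac{r}{s}a^s+\frac{r}{t}b^t$ (i.e.\ Young's inequality) for the first part, and for the second part takes $r=p$, $s=q$, $t=\frac{pq}{q-p}$ with $g=\chi_\Omega$, exactly as you do.
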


\begin{proof}
For the first part only note that for every $a,b>0$ it follows
\begin{equation}\label{EQ: XsXtSubsetXr}
a^rb^r\le\frac{r}{s}\,a^s+\frac{r}{t}\,b^t.
\end{equation}
For the second part take $r=p$, $s=q$ and $t=\frac{pq}{q-p}$. Then,
if $f\in X(\mu)^q$, since $\chi_\Omega\in X(\mu)^t$, we have that
$f=f\chi_\Omega\in X(\mu)^p$.
\end{proof}

Recall that a \emph{quasi-norm} on a real vector space $X$ is a
non-negative real map $\Vert\cdot\Vert_X$ on $X$ satisfying
\begin{itemize}\setlength{\leftskip}{-2ex}\setlength{\itemsep}{1ex}
\item[(i)] $\Vert x\Vert_X=0$ if and only if $x=0$,

\item[(ii)] $\Vert \alpha x\Vert_X=\vert\alpha\vert\cdot\Vert x\Vert_X$
for all $\alpha\in\mathbb{R}$ and $x\in X$, and

\item[(iii)] there exists a constant $K\ge1$ such that $\Vert
x+y\Vert_X\le K(\Vert x\Vert_X+\Vert y\Vert_X)$ for all $x,y\in X$.
\end{itemize}
A quasi-norm $\Vert\cdot\Vert_X$ induces a metric topology on $X$ in
which a sequence $(x_n)$ converges to $x$ if and only if $\Vert
x-x_n\Vert_X\to0$. If $X$ is complete under this topology then it is
called a \emph{quasi-Banach space} (\emph{Banach space} if $K=1$). A
linear map $T\colon X\to Y$ between quasi-Banach spaces is
continuous if and only if there exists a constant $M>0$ such that
$\Vert T(x)\Vert_Y\le M\Vert x\Vert_X$ for all $x\in X$. For issues
related to quasi-Banach spaces see \cite{kalton-peck-roberts}.

A \emph{quasi-Banach function space} (quasi-B.f.s.\ for short) is a
i.f.s.\ $X(\mu)$ which is also a quasi-Banach space with a
quasi-norm $\Vert\cdot\Vert_{X(\mu)}$ \emph{compatible with the
$\mu$-a.e.\ pointwise order}, that is, if $f,g\in X(\mu)$ are such
that $|f|\le |g|$ $\mu$-a.e.\ then $\Vert f\Vert_{X(\mu)}\le\Vert
g\Vert_{X(\mu)}$. When the quasi-norm is a norm, $X(\mu)$ is called
a \emph{Banach function space} (B.f.s.). Note that every
quasi-B.f.s.\ is a quasi-Banach lattice for the $\mu$-a.e.\
pointwise order satisfying that if $f_n\to f$ in quasi-norm then
there exists a subsequence $f_{n_j}\to f$ $\mu$-a.e. Also note that
every positive linear operator between quasi-Banach lattices is
continuous, see the argument given in
\cite[p.\,2]{lindenstrauss-tzafriri} for Banach lattices which can
be adapted for quasi-Banach spaces. Then all
\lq\lq\,inclusions\rq\rq\ of the type $[i]$ between quasi-B.f.s.'
are continuous.

A quasi-B.f.s.\ $X(\mu)$ is said to be \emph{$\sigma$-order
continuous} if for every $(f_n)\subset X(\mu)$ with $f_n\downarrow0$
$\mu$-a.e.\ it follows that $\Vert f_n\Vert_X\downarrow0$.

It is routine to check that the intersection $X(\mu)\cap Y(\mu)$ of
two quasi-B.f.s.'\ (B.f.s.')\ $X(\mu)$ and $Y(\mu)$ is a
quasi-B.f.s.\ (B.f.s.)\ endowed with the quasi-norm (norm)
$$
\Vert f\Vert_{X(\mu)\cap Y(\mu)}=\max\big\{\Vert
f\Vert_{X(\mu)},\Vert f\Vert_{Y(\mu)}\big\}.
$$
Moreover, if $X(\mu)$ and $Y(\mu)$ are $\sigma$-order continuous
then $X(\mu)\cap Y(\mu)$ is $\sigma$-order continuous.

\begin{proposition}\label{PROP: X+Y-qBfs}
The sum $X(\mu)+Y(\mu)$ of two quasi-B.f.s.'\ (B.f.s.')\ $X(\mu)$
and $Y(\mu)$ is a quasi-B.f.s.\ (B.f.s.)\ endowed with the
quasi-norm (norm)
$$
\Vert f\Vert_{X(\mu)+Y(\mu)}=\inf\big(\Vert f_1\Vert_{X(\mu)}+\Vert
f_2\Vert_{Y(\mu)}\big),
$$
where the infimum is taken over all possible representations
$f=f_1+f_2$ $\mu$-a.e.\ with $f_1\in X(\mu)$ and $f_2\in Y(\mu)$.
Moreover, if $X(\mu)$ and $Y(\mu)$ are $\sigma$-order continuous
then $X(\mu)+Y(\mu)$ is also $\sigma$-order continuous.
\end{proposition}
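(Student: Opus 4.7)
The plan is to combine Proposition \ref{PROP: X+Y-i.f.s.} (which already gives the ideal structure) with four routine but distinct verifications: (a) that $\|\cdot\|_{X(\mu)+Y(\mu)}$ is a quasi-norm (a norm if both are norms), (b) that it is compatible with the $\mu$-a.e.\ order, (c) completeness, and (d) preservation of $\sigma$-order continuity.

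For (a), homogeneity is immediate from the definition. The triangle-type inequality follows by taking, for $\varepsilon>0$, near-optimal representations $f=f_1+f_2$ and $g=g_1+g_2$ and using the quasi-triangle inequalities in $X(\mu)$ and $Y(\mu)$ with constant $K=\max\{K_X,K_Y\}$; when both are actual norms, $K=1$. The delicate point is the implication $\|f\|_{X(\mu)+Y(\mu)}=0\Rightarrow f=0$ $\mu$-a.e.: for each $n$ pick a representation $f=f_1^{(n)}+f_2^{(n)}$ with $\|f_1^{(n)}\|_{X(\mu)}+\|f_2^{(n)}\|_{Y(\mu)}<1/n$; successively extract subsequences so that both components converge to $0$ $\mu$-a.e., and conclude $f=0$ $\mu$-a.e. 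For (b), given $|f|\le|g|$ $\mu$-a.e.\ and an arbitrary representation $g=g_1+g_2$, apply the explicit pointwise construction used in the proof of Proposition \ref{PROP: X+Y-i.f.s.} to produce $f=f_1+f_2$ with $|f_i|\le|g_i|$ $\mu$-a.e.; the quasi-norms of the $f_i$ are then dominated by those of the $g_i$, and passing to the infimum gives $\|f\|_{X(\mu)+Y(\mu)}\le\|g\|_{X(\mu)+Y(\mu)}$.

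Completeness is the main obstacle and will be handled via the standard \lq\lq rapidly Cauchy subsequence\rq\rq\ device, taking care that the quasi-triangle constant is involved. Given a Cauchy sequence $(f_n)$ in $X(\mu)+Y(\mu)$, extract a subsequence $(f_{n_k})$ with $\|f_{n_{k+1}}-f_{n_k}\|_{X(\mu)+Y(\mu)}\le 2^{-k}$, choose representations $f_{n_{k+1}}-f_{n_k}=g_k+h_k$ with $\|g_k\|_{X(\mu)}+\|h_k\|_{Y(\mu)}\le 2^{-k+1}$, and use completeness of $X(\mu)$ and $Y(\mu)$ to sum $\sum g_k$ in $X(\mu)$ and $\sum h_k$ in $Y(\mu)$; here one uses the Aoki--Rolewicz equivalent $p$-norm to guarantee that geometrically decaying quasi-norms are summable in the quasi-Banach setting. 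The partial sums of $(g_k+h_k)$ then converge in $X(\mu)+Y(\mu)$, yielding a limit for $(f_{n_k})$ and hence for the original Cauchy sequence.

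Finally, for (d), assume $X(\mu)$ and $Y(\mu)$ are $\sigma$-order continuous and let $(f_n)\subset X(\mu)+Y(\mu)$ satisfy $f_n\downarrow 0$ $\mu$-a.e. Since $0\le f_n\le f_1$ $\mu$-a.e., write $f_1=u+v$ $\mu$-a.e.\ with $u\in X(\mu)$, $v\in Y(\mu)$, both non-negative (possible by the construction in Proposition \ref{PROP: X+Y-i.f.s.}), and set
$$
u_n=\min\{f_n,u\},\qquad v_n=f_n-u_n=(f_n-u)^+.
$$
Then $0\le u_n\le u$ and $0\le v_n\le v$, so $u_n\in X(\mu)$ and $v_n\in Y(\mu)$; moreover both sequences are decreasing, with $u_n\downarrow 0$ and $v_n\downarrow 0$ $\mu$-a.e. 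The $\sigma$-order continuity of $X(\mu)$ and $Y(\mu)$ gives $\|u_n\|_{X(\mu)}\downarrow 0$ and $\|v_n\|_{Y(\mu)}\downarrow 0$, whence $\|f_n\|_{X(\mu)+Y(\mu)}\le\|u_n\|_{X(\mu)}+\|v_n\|_{Y(\mu)}\to 0$, as required.
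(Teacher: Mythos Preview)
Your proof is correct and follows essentially the same route as the paper's. For parts (a)--(c) you are more explicit where the paper simply cites \cite[\S\,3, Theorem 1.3]{bennett-sharpley} for compatible Banach couples and remarks that the argument adapts to the quasi-Banach setting; your use of Aoki--Rolewicz to handle the summability of geometrically decaying quasi-norms makes this adaptation precise. The monotonicity argument (b) is identical to the paper's. For part (d) your construction is in fact cleaner than the paper's: the paper builds the decreasing decompositions $f_n=f_n^1+f_n^2$ \emph{inductively}, at each step applying the construction from the proof of Proposition~\ref{PROP: X+Y-i.f.s.} to pass from a decomposition of $f_{n-1}$ to one of $f_n$ with $0\le f_n^i\le f_{n-1}^i$. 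Your explicit formulas $u_n=\min\{f_n,u\}$ and $v_n=(f_n-u)^+$ avoid the induction entirely; the bounds $0\le u_n\le u$, $0\le v_n\le v$, the monotonicity, and the pointwise convergence to $0$ are all immediate from $f_n\downarrow 0$ and $f_n\le f_1=u+v$. This is a small but genuine simplification.
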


\begin{proof}
From Proposition \ref{PROP: X+Y-i.f.s.} we have that $X(\mu)+Y(\mu)$
is a i.f.s. Even more, looking at the proof we see that for every
$f\in X(\mu)+Y(\mu)$ and $g\in L^0(\mu)$ with $|g|\le|f|$
$\mu$-a.e., if $f=f_1+f_2$ $\mu$-a.e.\ with $f_1\in X(\mu)$ and
$f_2\in Y(\mu)$ then there exist $g_1\in X(\mu)$ and $g_2\in Y(\mu)$
such that $|g_i|\le|f_i|$ $\mu$-a.e.\ and $g=g_1+g_2$. Then,
$$
\Vert g\Vert_{X(\mu)+Y(\mu)}\le \Vert g_1\Vert_{X(\mu)}+\Vert
g_2\Vert_{Y(\mu)}\le \Vert f_1\Vert_{X(\mu)}+\Vert f_2\Vert_{Y(\mu)}
$$
and so, taking infimum over all possible representations $f=f_1+f_2$
$\mu$-a.e.\ with $f_1\in X(\mu)$ and $f_2\in Y(\mu)$, it follows
that $\Vert g\Vert_{X(\mu)+Y(\mu)}\le\Vert f\Vert_{X(\mu)+Y(\mu)}$.
Hence, $\Vert\cdot\Vert_{X(\mu)+Y(\mu)}$ is compatible with the
$\mu$-a.e.\ pointwise order.

The proof of the fact that $\Vert\cdot\Vert_{X(\mu)+Y(\mu)}$ is a
quasi-norm for which $X(\mu)+Y(\mu)$ is complete is similar to the
one given in \cite[\S\,3, Theorem 1.3]{bennett-sharpley} for
compatible couples of Banach spaces.

Suppose that $X(\mu)$ and $Y(\mu)$ are $\sigma$-order continuous.
Let $(f_n)\subset X(\mu)+Y(\mu)$ be such that $f_n\downarrow0$
$\mu$-a.e. Consider $f_1=g+h$ $\mu$-a.e.\ with $g\in X(\mu)$ and
$h\in Y(\mu)$. We can rewrite $f_1=f_1^1+f_1^2$ with $f_1^1\in
X(\mu)$, $f_1^2\in Y(\mu)$ and $f_1^1,f_1^2\ge0$ $\mu$-a.e. This can
be done by taking $A=\big\{\omega\in\Omega:\,
f_1(\omega)\le|g(\omega)|\big\}$,
$f_1^1=f_1\chi_A+|g|\chi_{\Omega\backslash A}$ and
$f_1^2=(f_1-|g|)\chi_{\Omega\backslash A}$. Note that $f_1^1\in
X(\mu)$ as $0\le f_1^1\le|g|$ $\mu$-a.e.\ and $f_1^2\in Y(\mu)$ as
$0\le f_1^2\le|h|$ $\mu$-a.e. Since $0\le f_2\le f_1$ $\mu$-a.e.,
looking again at the proof of Proposition \ref{PROP: X+Y-i.f.s.} we
see that there exist $f_2^1\in X(\mu)$ and $f_2^2\in Y(\mu)$ such
that $0\le f_2^i\le f_1^i$ $\mu$-a.e.\ and $f_2=f_2^1+f_2^2$
$\mu$-a.e. By induction we construct two $\mu$-a.e.\ pointwise
decreasing sequences of positive functions $(f_n^1)\subset X(\mu)$
and $(f_n^2)\subset Y(\mu)$ such that $f_n=f_n^1+f_n^2$. Note that
$f_n^i\downarrow0$ $\mu$-a.e.\ as $0\le f_n^i\le f_n$ $\mu$-a.e.
Then, since $X(\mu)$ and $Y(\mu)$ are $\sigma$-order continuous, we
have that
$$
\Vert f_n\Vert_{X(\mu)+Y(\mu)}\le\Vert f_n^1\Vert_{X(\mu)}+\Vert
f_n^2\Vert_{Y(\mu)}\to 0.
$$
\end{proof}

Let $p\in(0,\infty)$. The $p$-power $X(\mu)^p$ of a quasi-B.f.s.\
$X(\mu)$ is a quasi-B.f.s.\ endowed with the quasi-norm
$$
\Vert f\Vert_{X(\mu)^p}=\Vert \,|f|^p\,\Vert_{X(\mu)}^{\frac{1}{p}}.
$$
Moreover, $X(\mu)^p$ is $\sigma$-order continuous whenever $X(\mu)$
is so. Note that in the case when $X(\mu)$ is a B.f.s.\ and $p\ge1$
it follows that $\Vert\cdot\Vert_{X(\mu)^p}$ is a norm and so
$X(\mu)^p$ is a B.f.s. An exhaustive study of the space $X(\mu)^p$
can be found in \cite[\S\,2.2]{okada-ricker-sanchezperez} for the
case when $\mu$ is finite and $\chi_\Omega\in X(\mu)$. This study
can be extended to our general case adapting the arguments with the
natural modifications (note that our $p$-powers here are the
$\frac{1}{p}$-th powers there).

\subsection{Integration with respect to a vector measure defined on a
$\delta$-ring}

Let $\mathcal{R}$ be a \emph{$\delta$-ring} of subsets of a set
$\Omega$, that is, a ring closed under countable intersections.
Measurability will be considered with respect to the
$\sigma$-algebra $\mathcal{R}^{loc}$ of all subsets $A$ of $\Omega$
such that $A\cap B\in\mathcal{R}$ for all $B\in\mathcal{R}$. Let us
write $\mathcal{S}(\mathcal{R})$ for the space of all
\emph{$\mathcal{R}$-simple} functions, that is, simple functions
with support in $\mathcal{R}$.

A set function $m\colon\mathcal{R}\to E$ with values in a Banach
space $E$ is said to be a \emph{vector measure} if $\sum m(A_n)$
converges to $m(\cup A_n)$ in $E$ for every sequence of pairwise
disjoint sets $(A_n)\subset\mathcal{R}$ with $\cup
A_n\in\mathcal{R}$.

Consider first a \emph{real measure} $\lambda\colon\mathcal{R}\to
\mathbb{R}$. The \emph{variation} of $\lambda$ is the measure
$|\lambda|\colon\mathcal{R}^{loc}\to[0,\infty]$ defined as
$$
|\lambda|(A)=\sup\Big\{\sum|\lambda(A_j)|:\, (A_j) \textnormal{
finite disjoint sequence in } \mathcal{R}\cap 2^A\Big\}.
$$
Note that $|\lambda|$ is finite on $\mathcal{R}$. The space
$L^1(\lambda)$ of integrable functions with respect to $\lambda$ is
defined as the classical space $L^1(|\lambda|)$. The integral with
respect to $\lambda$ of
$\varphi=\sum_{j=1}^n\alpha_j\chi_{A_j}\in\mathcal{S}(\mathcal{R})$
over $A\in\mathcal{R}^{loc}$ is defined in the natural way by
$\int_A\varphi\,d\lambda=\sum_{j=1}^n\alpha_j\lambda(A_j\cap A)$.
The space $\mathcal{S}(\mathcal{R})$ is dense in $L^1(\lambda)$,
allowing to define the integral of $f\in L^1(\lambda)$ over
$A\in\mathcal{R}^{loc}$ as
$\int_Af\,d\lambda=\lim\int_A\varphi_n\,d\lambda$ for any sequence
$(\varphi_n)\subset\mathcal{S}(\mathcal{R})$ converging to $f$ in
$L^1(\lambda)$.

Let now $m\colon\mathcal{R}\to E$ be a vector measure. The
\emph{semivariation} of $m$ is the set function $\Vert
m\Vert\colon\mathcal{R}^{loc}\to[0,\infty]$ defined by
$$
\Vert m\Vert(A)=\sup_{x^*\in B_{E^*}}|x^*m|(A).
$$
Here, $B_{E^*}$ is the closed unit ball of the  dual space $E^*$ of
$E$ and $|x^*m|$ is the variation of the real measure $x^*m$ given
by the composition of $m$ with $x^*$. A set $A\in\mathcal{R}^{loc}$
is \emph{$m$-null} if $\Vert m\Vert(A)=0$, or equivalently, if
$m(B)=0$ for all $B\in\mathcal{R}\cap2^A$. From \cite[Theorem
3.2]{brooks-dinculeanu}, there always exists a measure
$\eta\colon\mathcal{R}^{loc}\to[0,\infty]$ equivalent to $\Vert
m\Vert$, that is $m$ and $\eta$ have the same null sets. Let us
denote $L^0(m)=L^0(\eta)$.

The space $L^1(m)$ of integrable functions with respect to $m$ is
defined as the space of functions $f\in L^0(m)$ satisfying that
\begin{itemize}\setlength{\leftskip}{-3ex}\setlength{\itemsep}{1ex}
\item[(i)] $f\in L^1(x^*m)$ for every $x^*\in E^*$,
and
\item[(ii)] for each $A\in \mathcal{R}^{loc}$ there exists $x_A \in E$ such that
$$
x^*(x_A)=\int_A f\,dx^*m,\ \ \textnormal{ for every } x^*\in E^*.
$$
\end{itemize}
The vector $x_A$ is unique and will be denoted by  $\int_A f\,dm$.
The space $L^1(m)$ is a $\sigma$-order continuous B.f.s.\ related to
the measure space $(\Omega,\mathcal{R}^{loc},\eta)$, with norm
$$
\Vert f\Vert_{L^1(m)}=\sup_{x^*\in B_{E^*}}\int_\Omega |f|\,d|x^*m|.
$$
Moreover $\mathcal{S}(\mathcal{R})$ is dense in $L^1(m)$. Note that
$\int_A\varphi\,dm=\sum_{j=1}^n\alpha_jm(A_j\cap A)$ for every
$\varphi=\sum_{j=1}^n\alpha_j\chi_{A_j}\in\mathcal{S}(\mathcal{R})$
and $A\in\mathcal{R}^{loc}$.

The integration operator $I_m\colon L^1(m)\to E$ defined by
$I_m(f)=\int_\Omega f\,dm$ is a continuous linear operator with
$\Vert I_m(f)\Vert_E\le\Vert f\Vert_{L^1(m)}$. Even more,
\begin{equation}\label{EQ: L1(m)-norm}
\frac{1}{2}\Vert f\Vert_{L^1(m)}\le\sup_{A\in\mathcal{R}}\Vert
I_m(f\chi_A)\Vert_E\le\Vert f\Vert_{L^1(m)}
\end{equation}
for all $f\in L^1(m)$.

Let $p\in(0,\infty)$. We denote by $L^p(m)$ the $p$-power of
$L^1(m)$, that is,
$$
L^p(m)=\big\{f\in L^0(m):\, |f|^p\in L^1(m)\big\}.
$$
Then $L^p(m)$ is a quasi-B.f.s.\ with the quasi-norm $\Vert
f\Vert_{L^p(m)}=\Vert\,|f|^p\,\Vert_{L^1(m)}^{1/p}$. In the case
when $p\ge1$, we have that $\Vert\cdot\Vert_{L^p(m)}$ is a norm and
so $L^p(m)$ is a B.f.s.

These and other issues concerning integration with respect to a
vector measure defined on a $\delta$-ring can be found in
\cite{lewis}, \cite{masani-niemi1}, \cite{masani-niemi2},
\cite{delgado1}, \cite{calabuig-juan-sanchezperez} and
\cite{calabuig-delgado-juan-sanchezperez}.


\section{Optimal domain for order-w continuous operators on a
i.f.s.}\label{SEC: orderwCont-ifs}

Let $X(\mu)$ be a i.f.s.\ satisfying the $\sigma$-property (recall:
$\Omega=\cup\Omega_n$ with $\chi_{\Omega_n}\in X(\mu)$ for all $n$)
and consider the $\delta$--ring
$$
\Sigma_{X(\mu)}=\big\{A\in\Sigma:\, \chi_A\in X(\mu)\big\}.
$$
The $\sigma$-property guarantees that
$\Sigma_{X(\mu)}^{loc}=\Sigma$. Given a Banach space valued linear
operator $T\colon X(\mu)\to E$, we define the finitely additive set
function $m_T\colon\Sigma_{X(\mu)}\to E$ by $m_T(A)=T(\chi_A)$.

We will say that $T$ is \emph{order-w continuous} if $T(f_n)\to
T(f)$ weakly in $E$ whenever $f_n,f\in X(\mu)$ are such that $0\le
f_n\uparrow f$ $\mu$--a.e.

\begin{proposition}\label{PROP: XsubsetL1mT}
If $T$ is order-w continuous, then $m_T$ is a vector measure
satisfying that $[i]\colon X(\mu)\to L^1(m_T)$ is well defined and
$T=I_{m_T}\circ[i]$.
\end{proposition}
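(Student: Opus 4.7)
The plan is to establish three things in order: (a) $m_T$ is countably additive on $\Sigma_{X(\mu)}$; (b) $\mu$-null sets are $m_T$-null, so $[i]$ passes to $\mu$-a.e.\ classes; and (c) every $f\in X(\mu)$ belongs to $L^1(m_T)$ with $\int_A f\,dm_T = T(f\chi_A)$ for every $A\in\Sigma$, which specialized to $A=\Omega$ gives $T=I_{m_T}\circ[i]$.

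For (a), let $(A_n)\subset\Sigma_{X(\mu)}$ be pairwise disjoint with $A=\bigcup_n A_n\in\Sigma_{X(\mu)}$. The partial sums $s_N=\sum_{n=1}^N\chi_{A_n}$ lie in $X(\mu)$ (since $s_N\le\chi_A\in X(\mu)$) and satisfy $0\le s_N\uparrow\chi_A$ pointwise, so order-w continuity gives $\sum_{n=1}^N m_T(A_n)=T(s_N)\to T(\chi_A)=m_T(A)$ weakly. The same argument applied to any subsequence $(A_{n_k})$ (whose union still has indicator in $X(\mu)$ by the ideal property) shows that every subseries of $\sum_n m_T(A_n)$ converges weakly, so the Orlicz-Pettis theorem upgrades this to norm convergence. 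For (b), any $B\in\Sigma_{X(\mu)}$ contained in a $\mu$-null set satisfies $\chi_B=0$ in $X(\mu)$, whence $m_T(B)=T(0)=0$.

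The heart is (c). Fix $f\in X(\mu)$ with $f\ge 0$ and approximate by simple $0\le\varphi_n\uparrow f$ pointwise; since each nonzero level set of $\varphi_n$ has indicator dominated by a multiple of $f$, it lies in $\Sigma_{X(\mu)}$, so $\varphi_n\in\mathcal{S}(\Sigma_{X(\mu)})$. The key lemma is that $x^*\circ T$ is order bounded on $X(\mu)$ for every $x^*\in E^*$:
$$
M(f,x^*):=\sup\bigl\{|x^*T(\psi)| : \psi\in X(\mu),\ |\psi|\le f\bigr\}<\infty.
$$
I plan to prove this by contradiction via a summability trick. If the supremum were infinite, then after splitting into positive and negative parts and replacing $x^*$ by $-x^*$ if necessary, one obtains $\tilde\psi_n\in X(\mu)$ with $0\le\tilde\psi_n\le f$ and $x^*T(\tilde\psi_n)\ge n\cdot 4^n$. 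Setting $H_N=\sum_{n=1}^N 4^{-n}\tilde\psi_n$, the partial sums are dominated by $f/3\in X(\mu)$ and satisfy $0\le H_N\uparrow h\in X(\mu)$, so order-w continuity forces $x^*T(H_N)\to x^*T(h)\in\mathbb{R}$; but $x^*T(H_N)\ge\sum_{n\le N}n\to\infty$, a contradiction.

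Granted the lemma, condition (i) of the definition of $L^1(m_T)$ follows from a refinement computation: expressing $\int\varphi_n\,d|x^*m_T|$ as a supremum over partitions refining the level sets of $\varphi_n$ rewrites it as $\sup x^*T(\tilde\varphi_n)$ over sign-modifications $\tilde\varphi_n\in\mathcal{S}(\Sigma_{X(\mu)})$ with $|\tilde\varphi_n|=\varphi_n\le f$, so $\int f\,d|x^*m_T|=\sup_n\int\varphi_n\,d|x^*m_T|\le M(f,x^*)<\infty$ by classical monotone convergence, giving $f\in L^1(x^*m_T)$. For condition (ii), set $x_A:=T(f\chi_A)$ (well defined because $|f\chi_A|\le f\in X(\mu)$): on the one hand $x^*T(\varphi_n\chi_A)=\int_A\varphi_n\,dx^*m_T\to\int_A f\,dx^*m_T$ by scalar dominated convergence (using $f\in L^1(|x^*m_T|)$), and on the other $x^*T(\varphi_n\chi_A)\to x^*T(f\chi_A)$ by order-w continuity applied to $0\le\varphi_n\chi_A\uparrow f\chi_A$. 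Equating the two limits yields $x^*(x_A)=\int_A f\,dx^*m_T$ for every $x^*$, so $f\in L^1(m_T)$ with $\int_A f\,dm_T=T(f\chi_A)$; extending by $f=f^+-f^-$ and taking $A=\Omega$ gives $T=I_{m_T}\circ[i]$. The main obstacle is the order-boundedness lemma, where order-w continuity has to do the real work via the summability trick.
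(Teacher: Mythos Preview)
Your proof is correct. Parts (a) and (b), as well as the reduction to $f\ge0$ and the construction of $\varphi_n\in\mathcal{S}(\Sigma_{X(\mu)})$, are exactly as in the paper.

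The core step (c) is where you diverge from the paper. The paper does not prove $f\in L^1(x^*m_T)$ directly; instead it observes that order-w continuity gives the existence of $\lim_n\int_A\varphi_n\,dx^*m_T=x^*T(f\chi_A)$ for \emph{every} $A\in\Sigma$, and then invokes a monotone-convergence result for real measures on $\delta$-rings (\cite[Proposition~2.3]{delgado1}) which converts ``all localized limits exist'' into ``$f\in L^1(x^*m_T)$ with integral equal to the limit''. You replace this black box by a self-contained argument: the summability trick shows the linear form $x^*T$ is order-bounded on the order interval $[-f,f]$, and the refinement identity $\int\varphi_n\,d|x^*m_T|=\sup\{x^*T(\tilde\varphi):|\tilde\varphi|=\varphi_n,\ \tilde\varphi\ \text{simple}\}$ then yields $\int f\,d|x^*m_T|\le M(f,x^*)<\infty$ by monotone convergence for the positive measure $|x^*m_T|$. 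After that, condition (ii) follows by dominated convergence exactly as you write. Your route avoids the external citation at the cost of the extra lemma; the paper's route is shorter on the page but hides the analytic work inside \cite{delgado1}. Both are clean, and your order-boundedness lemma is a nice way to see where order-w continuity is really used.
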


\begin{proof}
Let $(A_n)\subset\Sigma_{X(\mu)}$ be a pairwise disjoint sequence
with $\cup A_n\in\Sigma_{X(\mu)}$. Since $T$ is order-w continuous,
for any subsequence $(A_{n_j})$ we have that
$$
\sum_{j=1}^N m_T(A_{n_j})=T(\chi_{\cup_{j=1}^NA_{n_j}})\to
T(\chi_{\cup A_{n_j}})=m_T(\cup A_{n_j})
$$
weakly in $E$. From the Orlicz-Pettis theorem (see \cite[Corollary
I.4.4]{diestel-uhl}) it follows that $\sum m_T(A_n)$ is
unconditionally convergent in norm to $m_T(\cup A_n)$. Thus, $m_T$
is a vector measure.

Note that $\Vert m_T\Vert\ll\mu$ and so $[i]\colon L^0(\mu)\to
L^0(m_T)$ is well defined. Also, note that for every
$\varphi\in\mathcal{S}(\Sigma_{X(\mu)})$ we have that
$I_{m_T}(\varphi)=T(\varphi)$.

Let $f\in X(\mu)$ be such that $f\ge0$ $\mu$-a.e.\ and take a
sequence of $\Sigma$-simple functions $0\le\varphi_n\uparrow f$
$\mu$-a.e. For each $n$ we can write
$\varphi_n=\sum_{j=1}^m\alpha_j\chi_{A_j}$ with
$(A_j)_{j=1}^m\subset\Sigma$ being a pairwise disjoint sequence and
$\alpha_j>0$ for all $j$. Since
$\chi_{A_j}\le\alpha_j^{-1}\varphi_n\le\alpha_j^{-1}f$ $\mu$-a.e.,
we have that $\chi_{A_j}\in X(\mu)$ and so
$\varphi_n\in\mathcal{S}(\Sigma_{X(\mu)})$. Fix $x^*\in E^*$. For
every $A\in\Sigma$ it follows that $x^*T(\varphi_n\chi_A)\to
x^*T(f\chi_A)$ as $T$ is order-w continuous. Note that
$x^*T(\varphi_n\chi_A)=\int_A\varphi_n\,dx^*m_T$ and that
$0\le\varphi_n\uparrow f$ $x^*m_T$-a.e.\ as $|x^*m_T|\ll \Vert
m_T\Vert\ll\mu$. From \cite[Proposition 2.3]{delgado1}, we have that
$f\in L^1(x^*m_T)$ and
$$
\int_Af\,dx^*m_T=\lim_{n\to\infty}\int_A\varphi_n\,dx^*m_T=
\lim_{n\to\infty}x^*T(\varphi_n\chi_A)=x^*T(f\chi_A).
$$
Therefore, $f\in L^1(m_T)$ and $I_{m_T}(f)=T(f)$.

For a general $f\in X(\mu)$, the result follows by taking the
positive and negative parts of $f$.
\end{proof}

For the case when $X(\mu)$ is a B.f.s., Proposition \ref{PROP:
XsubsetL1mT} and the next Theorem \ref{THM:
OrderwContinuousFactorization} can be deduced from \cite[Proposition
2.3]{delgado2} and \cite[Proposition
4]{calabuig-delgado-sanchezperez}. The proofs given here are more
direct and are valid for general i.f.s.'.

\begin{theorem}\label{THM: OrderwContinuousFactorization}
Suppose that $T$ is order-w continuous. Then, $T$ factors as
\begin{equation}\label{EQ: OrderwContinuousFactorization1}
\begin{split}
\xymatrix{
X(\mu) \ar[rr]^{T} \ar@{.>}[dr]_(.45){[i]} & & E\\
& L^1(m_T) \ar@{.>}[ur]_(.55){I_{m_T}}}
\end{split}
\end{equation}
with $I_{m_T}$ being order-w continuous. Moreover, the factorization
is \emph{optimal} in the sense:
\medskip
$$
\left.\begin{minipage}{6.5cm} \textnormal{\it If $Z(\xi)$ is a
i.f.s.\ such that $\xi\ll\mu$ and} \leqnomode
\begin{equation}\label{EQ: OrderwContinuousFactorization2}
\begin{split}
\xymatrix{
X(\mu) \ar[rr]^{T} \ar@{.>}[dr]_(.45){[i]} & & E\\
& Z(\xi) \ar@{.>}[ur]_(.5){S}}
\end{split}
\end{equation}
\textnormal{\it with $S$ being an order-w continuous linear
operator}
\end{minipage}\ \right\} \ \Longrightarrow \ \
\begin{minipage}{6cm}
\textnormal{\it $[i]\colon Z(\xi)\to L^1(m_T)$ is well} \vspace{1mm} \\
\textnormal{\it defined and $S=I_{m_T}\circ[i]$.}
\end{minipage}
$$
\end{theorem}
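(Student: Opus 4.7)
The plan is as follows. For the direct part of the theorem, the factorization $T=I_{m_T}\circ[i]$ is precisely the content of Proposition \ref{PROP: XsubsetL1mT}. The order-w continuity of $I_{m_T}$ will follow from the $\sigma$-order continuity of $L^1(m_T)$ recalled in Section \ref{SEC: Preliminaries}: if $0\le f_n\uparrow f$ in $L^1(m_T)$, then $f-f_n\downarrow 0$ gives $\|f-f_n\|_{L^1(m_T)}\to 0$, and continuity of $I_{m_T}$ yields norm (hence weak) convergence of $I_{m_T}(f_n)$ to $I_{m_T}(f)$.

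For the optimality, assume $S\circ[i]=T$ with $S$ order-w continuous on $Z(\xi)$. Two preliminary observations are needed. First, $Z(\xi)$ inherits the $\sigma$-property from $X(\mu)$, since $\chi_{\Omega_n}\in X(\mu)\subset Z(\xi)$. Second, in order to give meaning to $[i]\colon L^0(\xi)\to L^0(m_T)$ one must verify $\|m_T\|\ll\xi$: for $A\in\Sigma_{X(\mu)}$ with $\xi(A)=0$ we have $[i](\chi_A)=0$ in $Z(\xi)$, whence $m_T(A)=T(\chi_A)=S([i](\chi_A))=0$, and this forces $\|m_T\|$ to vanish on every $\xi$-null set in $\Sigma=\Sigma_{X(\mu)}^{loc}$.

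Now fix $f\in Z(\xi)$ with $f\ge 0$; the goal is to prove $f\in L^1(m_T)$ with $I_{m_T}(f\chi_A)=S(f\chi_A)$ for every $A\in\Sigma$. Mimicking Lemma \ref{LEM: Xmu-a.e.denseL0}, but using the sequence $(\Omega_n)$ from the $\sigma$-property of $X(\mu)$ (rather than that of $Z(\xi)$), build $\Sigma_{X(\mu)}$-simple functions $\varphi_n$ with $0\le\varphi_n\uparrow f$ pointwise and $\varphi_n\le n\chi_{\cup_{j=1}^n\Omega_j}$; by the ideal property, $\varphi_n\in X(\mu)$. For every $A\in\Sigma$ and $x^*\in E^*$, Proposition \ref{PROP: XsubsetL1mT} applied to $\varphi_n\chi_A\in X(\mu)$ gives
$$
\int_A\varphi_n\,dx^*m_T=x^*T(\varphi_n\chi_A)=x^*S(\varphi_n\chi_A),
$$
which converges to $x^*S(f\chi_A)$ by order-w continuity of $S$ (noting that $\varphi_n\chi_A\uparrow f\chi_A$ also $\xi$-a.e.\ since $\xi\ll\mu$). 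Applying \cite[Proposition 2.3]{delgado1} exactly as in the proof of Proposition \ref{PROP: XsubsetL1mT} then yields $f\in L^1(x^*m_T)$ with $\int_A f\,dx^*m_T=x^*S(f\chi_A)$ for each $x^*\in E^*$. Taking $x_A:=S(f\chi_A)$ verifies that $f\in L^1(m_T)$ and $I_{m_T}(f\chi_A)=S(f\chi_A)$; in particular $I_{m_T}(f)=S(f)$. Splitting $f=f^+-f^-$ handles the general case.

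The main subtlety is the choice of approximating sequence: the functions $\varphi_n$ must lie in $X(\mu)$, not merely in $Z(\xi)$, so that Proposition \ref{PROP: XsubsetL1mT} applies to each $\varphi_n\chi_A$ and the identification $T(\varphi_n\chi_A)=S(\varphi_n\chi_A)$ can be invoked; this is precisely what bridges the two operators. Building $\varphi_n$ from the $\sigma$-property sets of the smaller space $X(\mu)$ is the device that resolves this difficulty.
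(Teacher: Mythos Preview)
Your argument is correct, and for the first part (the factorization and the order-w continuity of $I_{m_T}$) it coincides with the paper's proof. For the optimality part, however, you take a genuinely different route.

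The paper's approach is modular: it applies Proposition \ref{PROP: XsubsetL1mT} a second time, now to the operator $S\colon Z(\xi)\to E$, obtaining $Z(\xi)\hookrightarrow L^1(m_S)$ with $S=I_{m_S}\circ[i]$. Since $m_T$ is the restriction of $m_S$ from $\Sigma_{Z(\xi)}$ to $\Sigma_{X(\mu)}$, the paper then invokes an external result, \cite[Lemma 3]{calabuig-delgado-sanchezperez}, to conclude $L^1(m_S)=L^1(m_T)$ and $I_{m_S}=I_{m_T}$. Your approach instead proves $Z(\xi)\hookrightarrow L^1(m_T)$ directly, by rerunning the integrability argument of Proposition \ref{PROP: XsubsetL1mT} for an arbitrary $f\in Z(\xi)$, with the crucial twist that the approximants $\varphi_n$ are chosen in $X(\mu)$ (via the $\sigma$-property sets of $X(\mu)$) so that both $T$ and $S$ can be evaluated on them and compared. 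This makes your proof self-contained, avoiding the appeal to the restriction lemma; the paper's version, in exchange, is shorter and reuses Proposition \ref{PROP: XsubsetL1mT} as a black box.

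One small inaccuracy: Lemma \ref{LEM: Xmu-a.e.denseL0} does not produce $\Sigma_{X(\mu)}$-simple functions but rather the truncations $f\chi_{A_n}\in X(\mu)$. This is harmless, since your argument only uses that $\varphi_n\chi_A\in X(\mu)$ (so that Proposition \ref{PROP: XsubsetL1mT} gives $\int_A\varphi_n\,dx^*m_T=x^*T(\varphi_n\chi_A)$); simpleness is not needed. Alternatively, one can get genuine $\Sigma_{X(\mu)}$-simple functions by taking $\varphi_n=\psi_n\chi_{A_n}$ with $(\psi_n)$ any increasing sequence of $\Sigma$-simple functions converging pointwise to $f$, which still satisfies the bound $\varphi_n\le n\chi_{\cup_{j\le n}\Omega_j}$.
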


\begin{proof}
The factorization \eqref{EQ: OrderwContinuousFactorization1} follows
from Proposition \ref{PROP: XsubsetL1mT}. Note that the integration
operator $I_{m_T}\colon L^1(m_T)\to E$ is order-w continuous, as it
is continuous and $L^1(m_T)$ is $\sigma$-order continuous.

Let $Z(\xi)$ satisfy \eqref{EQ: OrderwContinuousFactorization2}. In
particular, $Z(\xi)$ satisfies the $\sigma$-property, as if
$\chi_{A}\in X(\mu)$ then $\chi_A\in Z(\xi)$. From Proposition
\ref{PROP: XsubsetL1mT} applied to the operator $S\colon Z(\xi)\to
E$, we have that $[i]\colon Z(\xi)\to L^1(m_S)$ is well defined and
$S=I_{m_S}\circ[i]$. Note that
$\Sigma_{X(\mu)}\subset\Sigma_{Z(\xi)}$ and
$m_S(A)=S(\chi_A)=T(\chi_A)=m_T(A)$ for all $A\in\Sigma_{X(\mu)}$,
that is, $m_T$ is the restriction of $m_S\colon\Sigma_{Z(\xi)}\to E$
to $\Sigma_{X(\mu)}$. Then, from \cite[Lemma
3]{calabuig-delgado-sanchezperez}, it follows that
$L^1(m_S)=L^1(m_T)$ and $I_{m_S}=I_{m_T}$.
\end{proof}

We can rewrite Theorem \ref{THM: OrderwContinuousFactorization} in
terms of optimal domain.

\begin{corollary}
Suppose that $T$ is order-w continuous. Then $L^1(m_T)$ is the
largest i.f.s.\ to which $T$ can be extended as an order-w
continuous operator still with values in $E$. Moreover, the
extension of $T$ to $L^1(m_T)$ is given by the integration operator
$I_{m_T}$.
\end{corollary}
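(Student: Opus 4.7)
The plan is to read off both assertions directly from Theorem \ref{THM: OrderwContinuousFactorization}.

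For the existence of $L^1(m_T)$ as an order-w continuous extension of $T$, I would apply the theorem to obtain the factorization $T = I_{m_T} \circ [i]$, with $[i]\colon X(\mu) \to L^1(m_T)$ well defined. Since $L^1(m_T)$ is a $\sigma$-order continuous B.f.s.\ and $I_{m_T}$ is continuous, any sequence $0 \le f_n \uparrow f$ in $L^1(m_T)$ satisfies $f - f_n \downarrow 0$ $m_T$-a.e., hence $\|f - f_n\|_{L^1(m_T)} \to 0$ by $\sigma$-order continuity, and therefore $I_{m_T}(f_n) \to I_{m_T}(f)$ in norm and a fortiori weakly. Thus $I_{m_T}$ is the desired order-w continuous extension of $T$ to $L^1(m_T)$.

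For maximality, let $Z(\xi)$ be any i.f.s.\ for which $T$ admits an order-w continuous extension $S \colon Z(\xi) \to E$ with values in $E$. The assumption that $S$ extends $T$ forces $\xi \ll \mu$ (so that $\mu$-a.e.\ classes of functions in $X(\mu)$ may be viewed as elements of $Z(\xi)$) together with the factorization $T = S \circ [i]$ for $[i] \colon X(\mu) \to Z(\xi)$. This is exactly the hypothesis \eqref{EQ: OrderwContinuousFactorization2} of Theorem \ref{THM: OrderwContinuousFactorization}, whose conclusion gives $[i] \colon Z(\xi) \to L^1(m_T)$ well defined and $S = I_{m_T} \circ [i]$. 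Reading these two facts set-theoretically means $Z(\xi) \subset L^1(m_T)$ and that the extension $S$ is nothing but the restriction of $I_{m_T}$ to $Z(\xi)$, which is precisely the statement of the corollary.

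The main (and really the only) subtlety is bookkeeping: one must observe that $\chi_{\Omega_n} \in X(\mu) \subset Z(\xi)$, so $Z(\xi)$ itself inherits the $\sigma$-property and the $\delta$-ring $\Sigma_{Z(\xi)}$ is well defined, which is what allows Theorem \ref{THM: OrderwContinuousFactorization} to be applied to $S$ in the first place. No genuinely new ideas are needed beyond those already recorded in the theorem.
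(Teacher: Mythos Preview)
Your proposal is correct and matches the paper's approach: the corollary is presented there without proof, as a direct restatement of Theorem~\ref{THM: OrderwContinuousFactorization} in optimal-domain language, and your argument simply unpacks that restatement (including the order-w continuity of $I_{m_T}$, which the paper already records in the proof of the theorem). The one point to phrase more carefully is your claim that ``$S$ extends $T$ forces $\xi\ll\mu$'': this is not literally forced but is rather the intended meaning of ``extension'' in this context (via the map $[i]$), exactly as in hypothesis~\eqref{EQ: OrderwContinuousFactorization2}.
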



\section{Optimal domain for $p$-th power factorable operators on a
i.f.s.\ with an order-w continuous extension} \label{SEC:
orderwCont-pthPowerFactor-ifs}

Let $X(\mu)$ be a i.f.s.\ satisfying the $\sigma$-property and let
$T\colon X(\mu)\to E$ be a linear operator with values in a Banach
space $E$.

For $p\in(0,\infty)$, we call $T$ \emph{$p$-th power factorable with
an order-w continuous extension} if there is an order-w continuous
linear extension of $T$ to $X(\mu)^{\frac{1}{p}}+X(\mu)$, i.e. $T$
factors as
$$
\xymatrix{
X(\mu) \ar[rr]^{T} \ar@{.>}[dr]_(0.4){i} & & E\\
& X(\mu)^{\frac{1}{p}}+X(\mu) \ar@{.>}[ur]_(0.6){S}}
$$
with $S$ being an order-w continuous linear operator.

Note that in the case when $\chi_\Omega\in X(\mu)$, from Lemma
\ref{LEM: XsXtSubsetXr}, if $1<p$ we have that $X(\mu)\subset
X(\mu)^{\frac{1}{p}}$ and so
$X(\mu)^{\frac{1}{p}}+X(\mu)=X(\mu)^{\frac{1}{p}}$. Similarly, if
$p\le1$ then $X(\mu)^{\frac{1}{p}}+X(\mu)=X(\mu)$, but hence to say
that $T$ is $p$-th power factorable with an order-w continuous
extension is just to say that $T$ is order-w continuous.

\begin{proposition}\label{PROP: Xsubset(LpCapL1)mT}
The following statements are equivalent:
\begin{itemize}\setlength{\leftskip}{-3ex}\setlength{\itemsep}{1ex}
\item[(a)] $T$ is $p$-th power factorable with an order-w continuous extension.

\item[(b)] $T$ is order-w continuous and $[i]\colon
X(\mu)^{\frac{1}{p}}+X(\mu)\to L^1(m_T)$ is well defined.

\item[(c)] $T$ is order-w continuous and $[i]\colon X(\mu)\to L^p(m_T)\cap L^1(m_T)$ is well defined.
\end{itemize}
Moreover, if (a)-(c) holds, the extension of $T$ to
$X(\mu)^{\frac{1}{p}}+X(\mu)$ coincides with integration operator
$I_{m_T}\circ[i]$.
\end{proposition}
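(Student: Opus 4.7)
The plan is to prove the chain (a)$\Rightarrow$(b)$\Rightarrow$(c)$\Rightarrow$(a) using essentially the definitions of $X(\mu)^{1/p}$ and $L^p(m_T)$ together with Theorem \ref{THM: OrderwContinuousFactorization} and the $\sigma$-order continuity of $L^1(m_T)$. The final formula for the extension follows from the optimality clause of Theorem \ref{THM: OrderwContinuousFactorization}.

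\textbf{(a)$\Rightarrow$(b).} Assume the order-w continuous extension $S\colon X(\mu)^{1/p}+X(\mu)\to E$ exists. Its restriction to $X(\mu)$ is $T$, so $T$ is order-w continuous. Note that $X(\mu)^{1/p}+X(\mu)$ inherits the $\sigma$-property from $X(\mu)$, so Proposition \ref{PROP: XsubsetL1mT} applies to $S$: the inclusion $[i]\colon X(\mu)^{1/p}+X(\mu)\to L^1(m_S)$ is well defined. The vector measures $m_S$ and $m_T$ agree on $\Sigma_{X(\mu)}$, and since $\chi_A\in X(\mu)^{1/p}+X(\mu)$ iff $\chi_A\in X(\mu)$ (because $|\chi_A|^{1/p}=\chi_A$), we have $\Sigma_{X(\mu)^{1/p}+X(\mu)}=\Sigma_{X(\mu)}$, so $m_S=m_T$ as vector measures on $\Sigma_{X(\mu)}$. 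Hence $L^1(m_S)=L^1(m_T)$ and (b) holds.

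\textbf{(b)$\Rightarrow$(c).} Assuming (b), we already have $X(\mu)\subset X(\mu)^{1/p}+X(\mu)\subset L^1(m_T)$. For $f\in X(\mu)$, the function $|f|^p$ satisfies $(|f|^p)^{1/p}=|f|\in X(\mu)$, hence $|f|^p\in X(\mu)^{1/p}\subset X(\mu)^{1/p}+X(\mu)\subset L^1(m_T)$, i.e.\ $f\in L^p(m_T)$. Thus $[i]\colon X(\mu)\to L^p(m_T)\cap L^1(m_T)$ is well defined.

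\textbf{(c)$\Rightarrow$(a).} Given (c), I first show $X(\mu)^{1/p}+X(\mu)\subset L^1(m_T)$. For $g\in X(\mu)^{1/p}$, the function $|g|^{1/p}$ lies in $X(\mu)$, so by (c) it lies in $L^p(m_T)$, which means $|g|=(|g|^{1/p})^p\in L^1(m_T)$; hence $g\in L^1(m_T)$. Combined with $X(\mu)\subset L^1(m_T)$, this gives $X(\mu)^{1/p}+X(\mu)\subset L^1(m_T)$. Define $S=I_{m_T}\circ[i]$ on $X(\mu)^{1/p}+X(\mu)$. It is linear and extends $T$ by Proposition \ref{PROP: XsubsetL1mT}. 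To see $S$ is order-w continuous, take $0\le f_n\uparrow f$ $\mu$-a.e.\ in $X(\mu)^{1/p}+X(\mu)$; since $\Vert m_T\Vert\ll\mu$ the same monotone convergence holds $m_T$-a.e., so $f-f_n\downarrow 0$ in $L^1(m_T)$, and $\sigma$-order continuity of $L^1(m_T)$ yields $\Vert f-f_n\Vert_{L^1(m_T)}\to 0$. Continuity of $I_{m_T}$ gives $S(f_n)\to S(f)$ in norm, hence weakly.

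\textbf{Formula for the extension.} Any order-w continuous extension $S$ of $T$ on $X(\mu)^{1/p}+X(\mu)$ satisfies the hypotheses of the optimality part of Theorem \ref{THM: OrderwContinuousFactorization} with $Z(\xi)=X(\mu)^{1/p}+X(\mu)$, which forces $S=I_{m_T}\circ[i]$. The only mildly subtle point is the argument in (a)$\Rightarrow$(b) identifying $m_S$ with $m_T$ and invoking that $L^1$ is unchanged under this identification; the rest is a clean unwinding of the definitions of $p$-power and $L^p(m_T)$.
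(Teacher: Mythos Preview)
Your argument is correct and tracks the paper's proof closely: your (b)$\Leftrightarrow$(c) and (c)$\Rightarrow$(a) are exactly the paper's (b)$\Leftrightarrow$(c) and (b)$\Rightarrow$(a), and your final paragraph is precisely how the paper identifies the extension. The only divergence is in (a)$\Rightarrow$(b): the paper simply invokes the optimality clause of Theorem~\ref{THM: OrderwContinuousFactorization} with $Z(\xi)=X(\mu)^{1/p}+X(\mu)$, which immediately gives $[i]\colon X(\mu)^{1/p}+X(\mu)\to L^1(m_T)$ and $S=I_{m_T}\circ[i]$, whereas you apply Proposition~\ref{PROP: XsubsetL1mT} to $S$ and then identify $m_S$ with $m_T$ by arguing that the underlying $\delta$-rings coincide.

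That claim $\Sigma_{X(\mu)^{1/p}+X(\mu)}=\Sigma_{X(\mu)}$ is true, but your parenthetical ``because $|\chi_A|^{1/p}=\chi_A$'' only establishes $\Sigma_{X(\mu)^{1/p}}=\Sigma_{X(\mu)}$; it does not by itself explain why the equality survives passage to the sum (for general i.f.s.'\ $Y,Z$ one can have $\Sigma_{Y+Z}\supsetneq\Sigma_Y\cup\Sigma_Z$). To close this small gap: given $\chi_A\in X(\mu)^{1/p}+X(\mu)$, use the decomposition from the proof of Proposition~\ref{PROP: X+Y-i.f.s.} to write $\chi_A=h_1+h_2$ with $0\le h_i\le\chi_A$, $h_1\in X(\mu)^{1/p}$, $h_2\in X(\mu)$; then, since $0\le h_i\le 1$, the monotonicity of $t\mapsto t^{1/p}$ on $[0,1]$ forces either $h_1\in X(\mu)$ (if $p\ge1$) or $h_2\in X(\mu)^{1/p}$ (if $p<1$), and in both cases $\chi_A\in X(\mu)$. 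Alternatively, you could simply cite Theorem~\ref{THM: OrderwContinuousFactorization} as the paper does---indeed, your own final paragraph already does exactly this, so the detour through $m_S$ is avoidable.
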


\begin{proof}
(a) $\Rightarrow$ (b) Note that $T$ is order-w continuous as it has
an order-w continuous extension. Let $S\colon
X(\mu)^{\frac{1}{p}}+X(\mu)\to E$ be an order-w continuous linear
operator extending $T$. Then, from Theorem \ref{THM:
OrderwContinuousFactorization}, it follows that $[i]\colon
X(\mu)^{\frac{1}{p}}+X(\mu)\to L^1(m_T)$ is well defined and
$S=I_{m_T}\circ[i]$.

(b) $\Leftrightarrow$ (c) Since $T$ is is order-w continuous, by
Proposition \ref{PROP: XsubsetL1mT} we always have that $[i]\colon
X(\mu)\to L^1(m_T)$ is well defined. Suppose that $[i]\colon
X(\mu)^{\frac{1}{p}}+X(\mu)\to L^1(m_T)$ is well defined. If $f\in
X(\mu)$, since $|f|^p\in X(\mu)^{\frac{1}{p}}\subset
X(\mu)^{\frac{1}{p}}+X(\mu)$, we have that $|f|^p\in L^1(m_T)$ and
so $f\in L^p(m_T)$. Then $f\in L^p(m_T)\cap L^1(m_T)$. Conversely,
suppose that $[i]\colon X(\mu)\to L^p(m_T)\cap L^1(m_T)$ is well
defined. Let $f\in X(\mu)^{\frac{1}{p}}+X(\mu)$ and write
$f=f_1+f_2$ $\mu$-a.e.\ with $f_1\in X(\mu)^{\frac{1}{p}}$ and
$f_2\in X(\mu)$. Since $|f_1|^{\frac{1}{p}}\in X(\mu)$ we have that
$|f_1|^{\frac{1}{p}}\in L^p(m_T)\cap L^1(m_T)\subset L^p(m_T)$ and
so $f_1\in L^1(m_T)$. Then, $f\in L^1(m_T)$ as $f_2\in L^1(m_T)$.

(b) $\Rightarrow$ (a) From Proposition \ref{PROP: XsubsetL1mT} and
since $[i]\colon X(\mu)^{\frac{1}{p}}+X(\mu)\to L^1(m_T)$ is well
defined, we have that the operator $I_{m_T}\circ[i]$ extends $T$ to
$X(\mu)^{\frac{1}{p}}+X(\mu)$. Moreover, the extension
$I_{m_T}\circ[i]\colon X(\mu)^{\frac{1}{p}}+X(\mu)\to E$ is order-w
continuous as the integration operator $I_{m_T}\colon L^1(m_T)\to E$
is so.
\end{proof}

In the case when $\chi_\Omega\in X(\mu)$ and $T$ is order-w
continuous, from Proposition \ref{PROP: XsubsetL1mT}, we have that
$\chi_\Omega\in L^1(m_T)$. So, from Lemma \ref{LEM: XsXtSubsetXr},
if $p>1$ then $L^p(m_T)\subset L^1(m_T)$ and hence $L^p(m_T)\cap
L^1(m_T)=L^p(m_T)$. If $p\le1$ then $L^p(m_T)\cap
L^1(m_T)=L^1(m_T)$, but hence, as commented before, $T$ being $p$-th
power factorable with an order-w continuous extension is just $T$
being order-w continuous.

\begin{theorem}\label{THM: orderwCont-pthPowerFactor-Factorization}
Suppose that $T$ is $p$-th power factorable with an order-w
continuous extension. Then, $T$ factors as
\begin{equation}\label{EQ: orderwCont-pthPowerFactor-Factorization1}
\begin{split}
\xymatrix{
X(\mu) \ar[rr]^{T} \ar@{.>}[dr]_(.4){[i]} & & E\\
& L^p(m_T)\cap L^1(m_T) \ar@{.>}[ur]_(.6){I_{m_T}}}
\end{split}
\end{equation}
with $I_{m_T}$ being $p$-th power factorable with an order-w
continuous extension. Moreover, the factorization is \emph{optimal}
in the sense:
\medskip
$$
\left.\begin{minipage}{6cm} \textnormal{\it If $Z(\xi)$ is a i.f.s.\
such that $\xi\ll\mu$ and} \leqnomode
\begin{equation}\label{EQ: orderwCont-pthPowerFactor-Factorization2}
\begin{split}
\xymatrix{
X(\mu) \ar[rr]^{T} \ar@{.>}[dr]_(.45){[i]} & & E\\
& Z(\xi) \ar@{.>}[ur]_{S}}
\end{split}
\end{equation}
\textnormal{\it with $S$ being a $p$-th power factorable linear
operator with an order-w continuous extension}
\end{minipage}\ \right\} \ \Longrightarrow \ \
\begin{minipage}{6cm}
\textnormal{\it $[i]\colon Z(\xi)\to L^p(m_T)\cap L^1(m_T)$} \vspace{1mm} \\
\textnormal{\it is well defined and $S=I_{m_T}\circ[i]$.}
\end{minipage}
$$
\end{theorem}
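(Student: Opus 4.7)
The plan is to mirror the structure of the proof of Theorem \ref{THM: OrderwContinuousFactorization}, using Proposition \ref{PROP: Xsubset(LpCapL1)mT} as the $p$-power analogue of Proposition \ref{PROP: XsubsetL1mT}. First, the factorization \eqref{EQ: orderwCont-pthPowerFactor-Factorization1} is immediate from the implication (a)$\Rightarrow$(c) of Proposition \ref{PROP: Xsubset(LpCapL1)mT}, which says both that $[i]\colon X(\mu)\to L^p(m_T)\cap L^1(m_T)$ is well defined and that the order-w continuous extension of $T$ coincides with $I_{m_T}\circ[i]$.

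Next, to verify that $I_{m_T}$ is itself $p$-th power factorable with an order-w continuous extension on $L^p(m_T)\cap L^1(m_T)$, I would apply the same equivalence (a)$\Leftrightarrow$(c) to $I_{m_T}$ in place of $T$. The integration operator is continuous on the $\sigma$-order continuous B.f.s.\ $L^1(m_T)$, hence order-w continuous. Moreover, on the $\delta$-ring $\Sigma_{L^p(m_T)\cap L^1(m_T)}$ one has $m_{I_{m_T}}(A)=I_{m_T}(\chi_A)=m_T(A)$, so $m_T$ is the restriction of $m_{I_{m_T}}$ to $\Sigma_{X(\mu)}$; by \cite[Lemma 3]{calabuig-delgado-sanchezperez} this forces $L^1(m_{I_{m_T}})=L^1(m_T)$ (with equal integration operators), and passing to $p$-th powers gives $L^p(m_{I_{m_T}})\cap L^1(m_{I_{m_T}})=L^p(m_T)\cap L^1(m_T)$, so the required inclusion $[i]$ is trivially defined.

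For the optimality statement, assume $Z(\xi)$ and $S$ satisfy \eqref{EQ: orderwCont-pthPowerFactor-Factorization2}. Since $\chi_{A}\in X(\mu)$ forces $\chi_A\in Z(\xi)$, we obtain $\Sigma_{X(\mu)}\subset \Sigma_{Z(\xi)}$ and $Z(\xi)$ inherits the $\sigma$-property; the compatibility $S\circ[i]=T$ yields
$$
m_S(A)=S(\chi_A)=T(\chi_A)=m_T(A),\qquad A\in\Sigma_{X(\mu)},
$$
so $m_T$ is the restriction of $m_S\colon\Sigma_{Z(\xi)}\to E$ to $\Sigma_{X(\mu)}$. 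Applying Proposition \ref{PROP: Xsubset(LpCapL1)mT} to $S$ gives that $[i]\colon Z(\xi)\to L^p(m_S)\cap L^1(m_S)$ is well defined with $S=I_{m_S}\circ[i]$, and \cite[Lemma 3]{calabuig-delgado-sanchezperez} identifies $L^1(m_S)=L^1(m_T)$ and $I_{m_S}=I_{m_T}$, which upon taking $p$-th powers yields $L^p(m_S)\cap L^1(m_S)=L^p(m_T)\cap L^1(m_T)$. Combining these facts gives the claim.

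The only genuinely nontrivial ingredient is the invariance of $L^1$ under restriction of a vector measure to a smaller $\delta$-ring whose localization coincides, i.e.\ \cite[Lemma 3]{calabuig-delgado-sanchezperez}; everything else is a direct specialization of Proposition \ref{PROP: Xsubset(LpCapL1)mT} combined with the optimality already established in Theorem \ref{THM: OrderwContinuousFactorization}.
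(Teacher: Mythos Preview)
Your proposal is correct and follows essentially the same route as the paper's own proof: both derive the factorization from Proposition~\ref{PROP: Xsubset(LpCapL1)mT}, verify that $I_{m_T}$ satisfies condition~(c) of that proposition by invoking \cite[Lemma~3]{calabuig-delgado-sanchezperez} to identify $L^1(m_{I_{m_T}})=L^1(m_T)$, and handle optimality by applying Proposition~\ref{PROP: Xsubset(LpCapL1)mT} to $S$ together with the same restriction lemma. The only small omissions are that you should state explicitly that $L^p(m_T)\cap L^1(m_T)$ inherits the $\sigma$-property from $X(\mu)$ (needed before applying Proposition~\ref{PROP: Xsubset(LpCapL1)mT} to $I_{m_T}$), and that the equality $m_{I_{m_T}}(A)=m_T(A)$ is asserted for $A\in\Sigma_{X(\mu)}$ rather than on all of $\Sigma_{L^p(m_T)\cap L^1(m_T)}$, since $m_T$ is only defined on the former.
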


\begin{proof}
The factorization \eqref{EQ:
orderwCont-pthPowerFactor-Factorization1} follows from Propositions
\ref{PROP: XsubsetL1mT} and \ref{PROP: Xsubset(LpCapL1)mT}. Note
that $L^p(m_T)\cap L^1(m_T)$ satisfies the $\sigma$-property as
$X(\mu)$ does. Let us see that the operator $I_{m_T}\colon
L^p(m_T)\cap L^1(m_T)\to E$ is $p$-th power factorable with an
order-w continuous extension by ussing Proposition \ref{PROP:
Xsubset(LpCapL1)mT}.(c). This operator is order-w continuous as the
integration operator $I_{m_T}\colon L^1(m_T)\to E$ is so. On other
hand, since $\Sigma_{X(\mu)}\subset\Sigma_{L^p(m_T)\cap L^1(m_T)}$
and $m_{I_{m_T}}(A)=I_{m_T}(\chi_A)=T(\chi_A)=m_T(A)$ for all
$A\in\Sigma_{X(\mu)}$ (i.e.\ $m_T$ is the restriction of
$m_{I_{m_T}}\colon\Sigma_{L^p(m_T)\cap L^1(m_T)}\to E$ to
$\Sigma_{X(\mu)}$), from \cite[Lemma
3]{calabuig-delgado-sanchezperez}, it follows that
$L^1(m_{I_{m_T}})=L^1(m_T)$. Then,
$$
[i]\colon L^p(m_T)\cap L^1(m_T)\to L^p(m_{I_{m_T}})\cap
L^1(m_{I_{m_T}})=L^p(m_T)\cap L^1(m_T)
$$
is well defined.

Let $Z(\xi)$ satisfy \eqref{EQ:
orderwCont-pthPowerFactor-Factorization2}. In particular, $Z(\xi)$
has the $\sigma$-property. Applying Proposition \ref{PROP:
Xsubset(LpCapL1)mT} to the operator $S\colon Z(\xi)\to E$, we have
that $[i]\colon Z(\xi)\to L^p(m_S)\cap L^1(m_S)$ is well defined and
$S=I_{m_S}\circ[i]$. Since $\Sigma_{X(\mu)}\subset\Sigma_{Z(\xi)}$
and $m_S(A)=m_T(A)$ for all $A\in\Sigma_{X(\mu)}$, from \cite[Lemma
3]{calabuig-delgado-sanchezperez}, it follows that
$L^1(m_S)=L^1(m_T)$ and $I_{m_S}=I_{m_T}$.
\end{proof}

Rewriting Theorem \ref{THM: orderwCont-pthPowerFactor-Factorization}
in terms of optimal domain we obtain the following conclusion.

\begin{corollary}
Suppose that $T$ is $p$-th power factorable with an order-w
continuous extension. Then $L^p(m_T)\cap L^1(m_T)$ is the largest
i.f.s.\ to which $T$ can be extended as a $p$-th power factorable
operator with an order-w continuous extension, still with values in
$E$. Moreover, the extension of $T$ to $L^p(m_T)\cap L^1(m_T)$ is
given by the integration operator $I_{m_T}$.
\end{corollary}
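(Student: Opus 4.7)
The plan is to obtain both assertions of the corollary as immediate consequences of Theorem \ref{THM: orderwCont-pthPowerFactor-Factorization}, which has already done the substantive work. The only thing to arrange is a translation between the ``factorization through $[i]$'' language of the theorem and the ``extension'' language of the corollary.

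For the existence half, I would invoke the factorization \eqref{EQ: orderwCont-pthPowerFactor-Factorization1} directly. Read as an extension, it says $X(\mu)$ sits inside $L^p(m_T)\cap L^1(m_T)$ via $[i]$ and that $I_{m_T}$ reproduces $T$ on this image. Theorem \ref{THM: orderwCont-pthPowerFactor-Factorization} also asserts that the integration operator $I_{m_T}\colon L^p(m_T)\cap L^1(m_T)\to E$ is itself $p$-th power factorable with an order-w continuous extension, so $L^p(m_T)\cap L^1(m_T)$ is admissible as an extension domain with the two required properties.

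For the maximality half, I would take any i.f.s.\ $Z(\xi)$ to which $T$ extends as an operator $S$ with the stated properties. In order for ``extension'' to make sense one needs $X(\mu)\subset Z(\xi)$ via the canonical class-change map, and this forces $\xi\ll\mu$; hence the hypotheses of the optimality statement \eqref{EQ: orderwCont-pthPowerFactor-Factorization2} are met. The theorem then delivers that $[i]\colon Z(\xi)\to L^p(m_T)\cap L^1(m_T)$ is well defined and that $S=I_{m_T}\circ[i]$. This is exactly the assertion that $Z(\xi)$ is contained in $L^p(m_T)\cap L^1(m_T)$ and that the extension $S$ coincides with the restriction of $I_{m_T}$, which is the maximality claim.

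There is no real obstacle in this proof; it is a reformulation. The only point that deserves a sentence is the observation that the ``$\xi\ll\mu$'' assumption present in Theorem \ref{THM: orderwCont-pthPowerFactor-Factorization} is automatic here: it is precisely what is needed to say that a map $Z(\xi)\to E$ is an \emph{extension} of $T\colon X(\mu)\to E$. Once this is noted, the diagram \eqref{EQ: orderwCont-pthPowerFactor-Factorization2} is the hypothesis of the corollary verbatim, and the conclusion of Theorem \ref{THM: orderwCont-pthPowerFactor-Factorization} is the conclusion of the corollary verbatim.
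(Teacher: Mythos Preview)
Your proposal is correct and matches the paper's approach exactly: the paper gives no separate proof of this corollary, introducing it simply as ``Rewriting Theorem \ref{THM: orderwCont-pthPowerFactor-Factorization} in terms of optimal domain we obtain the following conclusion.'' Your translation between the factorization and extension languages, together with the remark that $\xi\ll\mu$ is implicit in the notion of extension via $[i]$, is precisely the content of that rewriting.
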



\section{Optimal domain for continuous operators on a quasi-B.f.s.}
\label{SEC: Continuos-quasiBfs}

Let $X(\mu)$ be a quasi-B.f.s.\ satisfying the $\sigma$-property and
let $T\colon X(\mu)\to E$ be a linear operator with values in a
Banach space $E$.

\begin{theorem}\label{THM: ContinuousFactorization}
Suppose that $X(\mu)$ is $\sigma$-order continuous and $T$ is
continuous. Then, $T$ factors as
\begin{equation}\label{EQ: ContinuousFactorization1}
\begin{split}
\xymatrix{
X(\mu) \ar[rr]^{T} \ar@{.>}[dr]_(.45){[i]} & & E\\
& L^1(m_T) \ar@{.>}[ur]_(.55){I_{m_T}}}
\end{split}
\end{equation}
with $I_{m_T}$ being continuous. Moreover, the factorization is
\emph{optimal} in the sense:
\medskip
$$
\left.\begin{minipage}{6.6cm} \textnormal{\it If $Z(\xi)$ is a
$\sigma$-order continuous quasi-B.f.s.\ such that $\xi\ll\mu$ and}
\leqnomode
\begin{equation}\label{EQ: ContinuousFactorization2}
\begin{split}
\xymatrix{
X(\mu) \ar[rr]^{T} \ar@{.>}[dr]_(.45){[i]} & & E\\
& Z(\xi) \ar@{.>}[ur]_{S}}
\end{split}
\end{equation}
\textnormal{\it with $S$ being a continuous linear operator}
\end{minipage}\ \right\} \ \Longrightarrow \ \
\begin{minipage}{6cm}
\textnormal{\it $[i]\colon Z(\xi)\to L^1(m_T)$ is well} \vspace{1mm} \\
\textnormal{\it defined and $S=I_{m_T}\circ[i]$.}
\end{minipage}
$$
\end{theorem}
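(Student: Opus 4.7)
The plan is to reduce the entire statement to the already-proved Theorem \ref{THM: OrderwContinuousFactorization}, by observing that under the hypotheses of the present theorem, continuity of $T$ automatically upgrades to order-w continuity.

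First I would verify the key bridge: if $0 \le f_n \uparrow f$ $\mu$-a.e.\ with $f_n, f \in X(\mu)$, then $f - f_n \downarrow 0$ $\mu$-a.e., so $\sigma$-order continuity of $X(\mu)$ gives $\Vert f - f_n \Vert_{X(\mu)} \to 0$, and continuity of $T$ then forces $\Vert T(f) - T(f_n)\Vert_E \to 0$; in particular $T(f_n) \to T(f)$ weakly. Hence $T$ is order-w continuous, and Theorem \ref{THM: OrderwContinuousFactorization} applies to give the factorization \eqref{EQ: ContinuousFactorization1}. Continuity of $I_{m_T} \colon L^1(m_T) \to E$ is not a new fact — it is recorded in the Preliminaries via the estimate $\Vert I_{m_T}(f)\Vert_E \le \Vert f \Vert_{L^1(m_T)}$.

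For the optimality half, suppose $Z(\xi)$ is a $\sigma$-order continuous quasi-B.f.s.\ with $\xi \ll \mu$ and $S \colon Z(\xi) \to E$ a continuous linear operator making \eqref{EQ: ContinuousFactorization2} commute. Exactly the same monotone-convergence argument applied now to $S$ and $Z(\xi)$ shows that $S$ is order-w continuous. Since $Z(\xi)$ is in particular an i.f.s.\ with $\xi \ll \mu$, the optimality clause of Theorem \ref{THM: OrderwContinuousFactorization} delivers the desired conclusion: $[i] \colon Z(\xi) \to L^1(m_T)$ is well defined and $S = I_{m_T} \circ [i]$.

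There is no real obstacle here; the content of the theorem is the observation that $\sigma$-order continuity of the domain is exactly what turns the topological hypothesis (continuity) into the order-theoretic hypothesis (order-w continuity) treated earlier. The one point worth being careful about is that the $\sigma$-property of $Z(\xi)$ is not listed among the hypotheses, but it follows automatically from the existence of $[i] \colon X(\mu) \to Z(\xi)$ and the $\sigma$-property of $X(\mu)$, and in any case the proof of Theorem \ref{THM: OrderwContinuousFactorization} already absorbs this deduction internally, so invoking that theorem is valid as stated.
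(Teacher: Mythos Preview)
Your proposal is correct and follows essentially the same route as the paper: both reduce to Theorem \ref{THM: OrderwContinuousFactorization} by observing that $\sigma$-order continuity of the domain turns continuity of the operator into order-w continuity, then invoke that earlier theorem for both the factorization and the optimality clause. Your write-up simply unpacks the bridge ``continuous $+$ $\sigma$-order continuous $\Rightarrow$ order-w continuous'' in more detail than the paper does.
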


\begin{proof}
Since $X(\mu)$ is $\sigma$-order continuous and $T$ is continuous we
have that $T$ is order-w continuous and so the factorization
\eqref{EQ: ContinuousFactorization1} follows from Theorem \ref{THM:
OrderwContinuousFactorization}. Recall that $L^1(m_T)$ is
$\sigma$-order continuous and $I_{m_T}$ is continuous.

Let $Z(\xi)$ satisfy \eqref{EQ: ContinuousFactorization2}. In
particular $S$ is order-w continuous. From Theorem \ref{THM:
OrderwContinuousFactorization} we have that $[i]\colon Z(\xi)\to
L^1(m_T)$ is well defined and $S=I_{m_T}\circ[i]$.
\end{proof}

\begin{corollary}\label{COR: ContinuousFactorization}
Suppose that $X(\mu)$ is $\sigma$-order continuous and $T$ is
continuous. Then $L^1(m_T)$ is the largest $\sigma$-order continuous
quasi-B.f.s.\ to which $T$ can be extended as a continuous operator
still with values in $E$. Moreover, the extension of $T$ to
$L^1(m_T)$ is given by the integration operator $I_{m_T}$.
\end{corollary}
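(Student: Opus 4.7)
The plan is to read off this corollary directly from Theorem \ref{THM: ContinuousFactorization}, treating it as a pure reformulation in the language of ``optimal domains.'' There is no new analytic content to produce: the two assertions of the corollary (existence of a continuous extension to $L^1(m_T)$ given by $I_{m_T}$, and maximality of $L^1(m_T)$ among all admissible extension spaces) correspond respectively to the factorization diagram \eqref{EQ: ContinuousFactorization1} and to the implication governing \eqref{EQ: ContinuousFactorization2}.

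First I would address existence. The factorization diagram \eqref{EQ: ContinuousFactorization1} exhibits $T = I_{m_T} \circ [i]$, which by definition means that $I_{m_T}$ extends $T$ once $X(\mu)$ is identified inside $L^1(m_T)$ via the canonical map $[i]$ (well defined because $\Vert m_T\Vert \ll \mu$, as noted in the proof of Proposition \ref{PROP: XsubsetL1mT}). I would then recall from the Preliminaries that $L^1(m_T)$ is a $\sigma$-order continuous Banach function space --- hence in particular a $\sigma$-order continuous quasi-B.f.s.\ --- and that $I_{m_T}$ satisfies $\Vert I_{m_T}(f)\Vert_E \le \Vert f\Vert_{L^1(m_T)}$, so it is continuous with values in $E$. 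This verifies that $L^1(m_T)$ is a valid candidate for the extension space.

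Next I would address maximality. Let $Z(\xi)$ be any $\sigma$-order continuous quasi-B.f.s.\ with $\xi \ll \mu$ such that $T$ extends to a continuous linear operator $S\colon Z(\xi)\to E$; here the condition $\xi \ll \mu$ is exactly what is needed for the inclusion $[i]\colon X(\mu)\to Z(\xi)$ to make sense, so that the notion of ``extension'' matches diagram \eqref{EQ: ContinuousFactorization2}. These hypotheses are precisely those of the optimality clause of Theorem \ref{THM: ContinuousFactorization}, whose conclusion yields that $[i]\colon Z(\xi)\to L^1(m_T)$ is well defined and $S = I_{m_T}\circ [i]$. In particular every element of $Z(\xi)$ belongs (as a $\mu$-a.e.\ class) to $L^1(m_T)$, and the action of $S$ there agrees with $I_{m_T}$. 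This is exactly the statement that $L^1(m_T)$ is the largest such space and that $I_{m_T}$ is the extension of $T$ realized on it.

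There is no real obstacle to overcome --- the work is entirely packaged in Theorem \ref{THM: ContinuousFactorization}. The only point requiring a brief word of care is the interpretation of the phrase ``$T$ can be extended to $Z(\xi)$'': it is implicit that $Z(\xi)$ sits over a measure $\xi$ compatible with $\mu$ in the sense $\xi \ll \mu$, so that the inclusion $X(\mu)\hookrightarrow Z(\xi)$ is literally the map $[i]$ of Section \ref{SEC: Preliminaries}; otherwise the comparison ``largest'' is meaningless. Once this is flagged, the proof reduces to two one-line invocations of the preceding theorem.
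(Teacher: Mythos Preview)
Your proposal is correct and matches the paper's approach: the paper gives no separate proof for this corollary, treating it as an immediate reformulation of Theorem~\ref{THM: ContinuousFactorization} in the language of optimal domains. Your unpacking of the two halves (existence via \eqref{EQ: ContinuousFactorization1} and maximality via the optimality clause \eqref{EQ: ContinuousFactorization2}) is exactly the intended reading.
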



\section{Optimal domain for $p$-th power factorable operators on a
quasi-B.f.s.\ with a continuous extension}\label{SEC:
orderwCont-pthPowerFactor-quasiBfs}

Let $X(\mu)$ be a quasi-B.f.s.\ satisfying the $\sigma$-property and
let $T\colon X(\mu)\to E$ be a linear operator with values in a
Banach space $E$.

For $p\in(0,\infty)$, we call $T$ \emph{$p$-th power factorable with
a continuous extension} if there is a continuous linear extension of
$T$ to $X(\mu)^{\frac{1}{p}}+X(\mu)$, i.e. $T$ factors as
$$
\xymatrix{
X(\mu) \ar[rr]^{T} \ar@{.>}[dr]_(.4){i} & & E\\
& X(\mu)^{\frac{1}{p}}+X(\mu) \ar@{.>}[ur]_(.6){S}}
$$
with $S$ being a continuous linear operator.

Note that in the case when $\chi_\Omega\in X(\mu)$ and $1<p$, from
Lemma \ref{LEM: XsXtSubsetXr}, it follows that
$X(\mu)^{\frac{1}{p}}+X(\mu)=X(\mu)^{\frac{1}{p}}$. Then our
definition of $p$-th power factorable operator with a continuous
extension coincides with the one given in \cite[Definition
5.1]{okada-ricker-sanchezperez}. If $p\le1$, since
$X(\mu)^{\frac{1}{p}}+X(\mu)=X(\mu)$, to say that $T$ is $p$-th
power factorable with a continuous extension is just to say that $T$
is continuous.

\begin{proposition}\label{PROP: quasiBfsXsubset(LpCapL1)mT}
Suppose that $X(\mu)$ is $\sigma$-order continuous. Then, the
following statements are equivalent:
\begin{itemize}\setlength{\leftskip}{-3ex}\setlength{\itemsep}{1ex}
\item[(a)] $T$ is $p$-th power factorable with a continuous extension.

\item[(b)] $T$ is $p$-th power factorable with an order-w continuous extension.

\item[(c)] $T$ is order-w continuous and $[i]\colon
X(\mu)^{\frac{1}{p}}+X(\mu)\to L^1(m_T)$ is well defined.

\item[(d)] $T$ is order-w continuous and $[i]\colon X(\mu)\to L^p(m_T)\cap L^1(m_T)$ is well defined.

\item[(e)] There exists $C>0$ such that $\Vert
T(f)\Vert_E\le C\,\Vert f\Vert_{X(\mu)^{\frac{1}{p}}+X(\mu)}$ for
all $f\in X(\mu)$.
\end{itemize}
Moreover, if (a)-(e) holds, the extension of $T$ to
$X(\mu)^{\frac{1}{p}}+X(\mu)$ coincides with the integration
operator $I_{m_T}\circ[i]$.
\end{proposition}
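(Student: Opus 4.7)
The plan is to lean on Proposition \ref{PROP: Xsubset(LpCapL1)mT} for the bulk of the equivalences, and to use the $\sigma$-order continuity of $X(\mu)$ together with Proposition \ref{PROP: X+Y-qBfs} as the bridge between plain continuity and order-w continuity. Applying Proposition \ref{PROP: Xsubset(LpCapL1)mT} directly to $T$ yields (b) $\Leftrightarrow$ (c) $\Leftrightarrow$ (d), together with the identification of the order-w continuous extension as $I_{m_T}\circ[i]$. It remains to connect (a) and (e) to this chain.

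For (a) $\Leftrightarrow$ (b): Since $X(\mu)$ is $\sigma$-order continuous, so is $X(\mu)^{1/p}$ (from the preliminaries on $p$-powers), and hence so is $X(\mu)^{1/p}+X(\mu)$ by Proposition \ref{PROP: X+Y-qBfs}. On a $\sigma$-order continuous quasi-B.f.s., every continuous linear map is automatically order-w continuous: $0\le f_n\uparrow f$ $\mu$-a.e.\ forces $f-f_n\downarrow 0$, hence $\Vert f-f_n\Vert\to 0$, so the images converge in norm and therefore weakly. This gives (a) $\Rightarrow$ (b). Conversely, the order-w continuous extension provided by (b) is $I_{m_T}\circ[i]$, which is continuous because $I_{m_T}\colon L^1(m_T)\to E$ is continuous and the inclusion $[i]\colon X(\mu)^{1/p}+X(\mu)\to L^1(m_T)$ is continuous as a positive map between quasi-Banach lattices.

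For (a) $\Leftrightarrow$ (e): One direction is the definition of continuity of $S$. For (e) $\Rightarrow$ (a), I would extend $T$ by density. The key substep is that $X(\mu)$ is dense in $X(\mu)^{1/p}+X(\mu)$: using the $\sigma$-property write $\Omega=\bigcup\Omega_n$ with $\chi_{\Omega_n}\in X(\mu)$, and for $f\ge 0$ in $X(\mu)^{1/p}+X(\mu)$ consider the truncations $f_n=\min(f,n)\chi_{\bigcup_{j=1}^n\Omega_j}$; each $f_n$ lies in $X(\mu)$ (being bounded with support in $\Sigma_{X(\mu)}$), $f_n\uparrow f$ pointwise, and $\sigma$-order continuity of $X(\mu)^{1/p}+X(\mu)$ yields $\Vert f-f_n\Vert\to 0$. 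With density in hand, the bound in (e) makes $T$ uniformly continuous on the dense subspace $X(\mu)$ of the complete space $X(\mu)^{1/p}+X(\mu)$, so it extends to a continuous linear operator on the whole space.

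The hardest part is the density step, which genuinely uses both the $\sigma$-property and Proposition \ref{PROP: X+Y-qBfs}; the other implications are essentially bookkeeping on top of earlier results. The moreover-clause is then automatic: any continuous extension is order-w continuous by the argument used in (a) $\Rightarrow$ (b), so Proposition \ref{PROP: Xsubset(LpCapL1)mT} forces it to coincide with $I_{m_T}\circ[i]$.
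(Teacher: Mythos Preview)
Your proposal is correct and follows essentially the same route as the paper: the equivalences (b) $\Leftrightarrow$ (c) $\Leftrightarrow$ (d) are imported from Proposition \ref{PROP: Xsubset(LpCapL1)mT}, the bridge (a) $\Rightarrow$ (b) uses $\sigma$-order continuity of $X(\mu)^{1/p}+X(\mu)$ via Proposition \ref{PROP: X+Y-qBfs}, and (e) $\Rightarrow$ (a) is the same density/approximation argument the paper writes out explicitly using Lemma \ref{LEM: Xmu-a.e.denseL0}. The only cosmetic difference is that you close the loop with a direct (b) $\Rightarrow$ (a) (continuity of $I_{m_T}\circ[i]$) whereas the paper goes (c) $\Rightarrow$ (e) $\Rightarrow$ (a); both are immediate.
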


\begin{proof}
(a) $\Rightarrow$ (b) Let $S\colon X(\mu)^{\frac{1}{p}}+X(\mu)\to E$
be a continuous linear operator extending $T$. From Proposition
\ref{PROP: X+Y-qBfs} we have that $X(\mu)^{\frac{1}{p}}+X(\mu)$ is
$\sigma$-order continuous and so $S$ is order-w continuous. Then,
$T$ is $p$-th power factorable with an order-w continuous extension.

(b) $\Leftrightarrow$ (c) $\Leftrightarrow$ (d) and the fact that
the extension of $T$ to $X(\mu)^{\frac{1}{p}}+X(\mu)$ coincides with
the integration operator $I_{m_T}\circ[i]$ follow from Proposition
\ref{PROP: Xsubset(LpCapL1)mT}.

(c) $\Rightarrow$ (e) The operator $[i]\colon
X(\mu)^{\frac{1}{p}}+X(\mu)\to L^1(m_T)$ is continuous as it is
positive. Then, there exists a constant $C>0$ satisfying that
$$
\Vert f\Vert_{L^1(m_T)}\le C\,\Vert
f\Vert_{X(\mu)^{\frac{1}{p}}+X(\mu)}
$$
for all $f\in X(\mu)^{\frac{1}{p}}+X(\mu)$. Since $I_{m_T}$ extends
$T$ to $L^1(m_T)$, it follows that
$$
\Vert T(f)\Vert_E=\Vert I_{m_T}(f)\Vert_E\le\Vert
f\Vert_{L^1(m_T)}\le C\,\Vert f\Vert_{X(\mu)^{\frac{1}{p}}+X(\mu)}
$$
for all $f\in X(\mu)$.

(e) $\Rightarrow$ (a) Let $0\le f\in X(\mu)^{\frac{1}{p}}+X(\mu)$.
From Lemma \ref{LEM: Xmu-a.e.denseL0}, there exists $(f_n)\subset
X(\mu)$ such that $0\le f_n\uparrow f$ $\mu$-a.e. Since
$X(\mu)^{\frac{1}{p}}+X(\mu)$ is $\sigma$-order continuous, it
follows that $f_n\to f$ in the quasi-norm of
$X(\mu)^{\frac{1}{p}}+X(\mu)$. Then, since
$$
\Vert T(f_n)-T(f_m)\Vert_E=\Vert T(f_n-f_m)\Vert_E\le C\,\Vert
f_n-f_m\Vert_{X(\mu)^{\frac{1}{p}}+X(\mu)},
$$
we have that $\big(T(f_n)\big)$ converges to some element $e\in E$.
Define $S(f)=e$. Note that if $(g_n)\subset X(\mu)$ is another
sequence such that $0\le g_n\uparrow f$ $\mu$-a.e., then
\begin{eqnarray*}
\Vert T(f_n)-T(g_n)\Vert_E & \le & C\,\Vert
f_n-g_n\Vert_{X(\mu)^{\frac{1}{p}}+X(\mu)}
\\ & \le & CK\Big(\Vert f_n-f\Vert_{X(\mu)^{\frac{1}{p}}+X(\mu)}
+\Vert f-g_n\Vert_{X(\mu)^{\frac{1}{p}}+X(\mu)}\Big),
\end{eqnarray*}
where $K$ is the constant satisfying the property (iii) of the
quasi-norm $\Vert\cdot\Vert_{X(\mu)^{\frac{1}{p}}+X(\mu)}$, and so
$S$ is well defined. Also note that
\begin{eqnarray*}
\Vert S(f)\Vert_E & \le & \Vert S(f)-T(f_n)\Vert_E+\Vert
T(f_n)\Vert_E
\\ & \le & \Vert S(f)-T(f_n)\Vert_E+C\,\Vert f_n\Vert_{X(\mu)^{\frac{1}{p}}+X(\mu)} \\ & \le & \Vert
S(f)-T(f_n)\Vert_E+C\,\Vert f\Vert_{X(\mu)^{\frac{1}{p}}+X(\mu)}
\end{eqnarray*}
for all $n\ge1$, and thus $\Vert S(f)\Vert_E\le C\,\Vert
f\Vert_{X(\mu)^{\frac{1}{p}}+X(\mu)}$.

For a general $f\in X(\mu)^{\frac{1}{p}}+X(\mu)$, define
$S(f)=S(f^+)-S(f^-)$ where $f^+$ and $f^-$ are the positive and
negative parts of $f$ respectively.  It follows that $S$ is linear
and $S(f)=T(f)$ for all $f\in X(\mu)$. Moreover, for every $f\in
X(\mu)^{\frac{1}{p}}+X(\mu)$ we have that
\begin{eqnarray*}
\Vert S(f)\Vert_E & \le & \Vert S(f^+)\Vert_E+\Vert S(f^-)\Vert_E \\
& \le & C\,\Vert f^+\Vert_{X(\mu)^{\frac{1}{p}}+X(\mu)}+C\,\Vert
f^-\Vert_{X(\mu)^{\frac{1}{p}}+X(\mu)} \\ & \le & 2C\,\Vert
f\Vert_{X(\mu)^{\frac{1}{p}}+X(\mu)}.
\end{eqnarray*}
an so $S$ is continuous. Hence, $T$ is $p$-th power factorable with
a continuous extension.
\end{proof}

In the case when $\mu$ is finite, $\chi_\Omega\in X(\mu)$ and
$p\ge1$, the equivalences (a) $\Leftrightarrow$ (c)
$\Leftrightarrow$ (d) $\Leftrightarrow$ (e) of Proposition
\ref{PROP: quasiBfsXsubset(LpCapL1)mT} are proved in \cite[Theorem
5.7]{okada-ricker-sanchezperez}. Here we have included a more detailed
proof for the general case.

\begin{theorem}\label{THM: Cont-pthPowerFactor-Factorization}
Suppose that $X(\mu)$ is $\sigma$-order continuous and $T$ is $p$-th
power factorable with a continuous extension. Then, $T$ factors
as
\begin{equation}\label{EQ: Cont-pthPowerFactor-Factorization1}
\begin{split}
\xymatrix{
X(\mu) \ar[rr]^{T} \ar@{.>}[dr]_(.4){[i]} & & E\\
& L^p(m_T)\cap L^1(m_T) \ar@{.>}[ur]_(.6){I_{m_T}}}
\end{split}
\end{equation}
with $I_{m_T}$ being $p$-th power factorable with a continuous
extension. Moreover, the factorization is \emph{optimal} in the
sense:
\medskip
$$
\left.\begin{minipage}{6.1cm} \textnormal{\it If $Z(\xi)$ is a
$\sigma$-order continuous quasi-B.f.s.\ such that $\xi\ll\mu$ and}
\leqnomode
\begin{equation}\label{EQ: Cont-pthPowerFactor-Factorization2}
\begin{split}
\xymatrix{
X(\mu) \ar[rr]^{T} \ar@{.>}[dr]_(.45){[i]} & & E\\
& Z(\xi) \ar@{.>}[ur]_(.5){S}}
\end{split}
\end{equation}
\textnormal{\it with $S$ being a $p$-th power factorable linear
operator with a continuous extension}
\end{minipage}\ \right\} \ \Longrightarrow \ \
\begin{minipage}{6cm}
\textnormal{\it $[i]\colon Z(\xi)\to L^p(m_T)\cap L^1(m_T)$} \vspace{1mm} \\
\textnormal{\it is well defined and $S=I_{m_T}\circ[i]$.}
\end{minipage}
$$
\end{theorem}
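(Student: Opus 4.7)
The plan is to reduce everything to the order-w continuous version already proved (Theorem \ref{THM: orderwCont-pthPowerFactor-Factorization}) by exploiting the equivalence between \textquotedblleft $p$-th power factorable with a continuous extension\textquotedblright\ and \textquotedblleft $p$-th power factorable with an order-w continuous extension\textquotedblright\ supplied by Proposition \ref{PROP: quasiBfsXsubset(LpCapL1)mT} for $\sigma$-order continuous quasi-B.f.s.'. First, since $X(\mu)$ is $\sigma$-order continuous and $T$ is $p$-th power factorable with a continuous extension, implication (a) $\Rightarrow$ (b) $\Rightarrow$ (d) of Proposition \ref{PROP: quasiBfsXsubset(LpCapL1)mT} yields that $T$ is order-w continuous and $[i]\colon X(\mu)\to L^p(m_T)\cap L^1(m_T)$ is well defined. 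Combined with Proposition \ref{PROP: XsubsetL1mT}, this gives the factorization \eqref{EQ: Cont-pthPowerFactor-Factorization1} with $T=I_{m_T}\circ [i]$.

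Next I would verify that $I_{m_T}\colon L^p(m_T)\cap L^1(m_T)\to E$ is itself $p$-th power factorable with a continuous extension. The space $L^p(m_T)\cap L^1(m_T)$ is a $\sigma$-order continuous quasi-B.f.s.\ (since both $L^p(m_T)$ and $L^1(m_T)$ are so, and $\sigma$-order continuity is preserved under intersections, as stated in Section~\ref{SEC: Preliminaries}) and it satisfies the $\sigma$-property because $X(\mu)$ does and $[i]\colon X(\mu)\to L^p(m_T)\cap L^1(m_T)$ is well defined. Theorem \ref{THM: orderwCont-pthPowerFactor-Factorization} applied to $I_{m_T}$ tells us that $I_{m_T}$ is $p$-th power factorable with an order-w continuous extension. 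Applying the implication (b) $\Rightarrow$ (a) of Proposition \ref{PROP: quasiBfsXsubset(LpCapL1)mT} to the operator $I_{m_T}$ then upgrades this to the desired conclusion that $I_{m_T}$ is $p$-th power factorable with a continuous extension.

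For optimality, suppose $Z(\xi)$ is a $\sigma$-order continuous quasi-B.f.s.\ with $\xi\ll\mu$, and $S\colon Z(\xi)\to E$ is a continuous extension of $T$ which is itself $p$-th power factorable with a continuous extension. Applying Proposition \ref{PROP: quasiBfsXsubset(LpCapL1)mT} (in the direction (a) $\Rightarrow$ (b)) to the operator $S$ on the $\sigma$-order continuous quasi-B.f.s.\ $Z(\xi)$, we obtain that $S$ is $p$-th power factorable with an order-w continuous extension. The optimality statement of Theorem \ref{THM: orderwCont-pthPowerFactor-Factorization} now immediately gives that $[i]\colon Z(\xi)\to L^p(m_T)\cap L^1(m_T)$ is well defined and that $S=I_{m_T}\circ [i]$, completing the proof.

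There is essentially no genuine obstacle: the whole argument is a translation from the order-w continuous setting to the continuous setting via Proposition \ref{PROP: quasiBfsXsubset(LpCapL1)mT}, exactly parallel to how Theorem \ref{THM: ContinuousFactorization} is obtained from Theorem \ref{THM: OrderwContinuousFactorization}. The only small point one must be careful about is confirming that $L^p(m_T)\cap L^1(m_T)$ inherits both $\sigma$-order continuity and the $\sigma$-property, so that the hypotheses of Proposition \ref{PROP: quasiBfsXsubset(LpCapL1)mT} are applicable to the integration operator $I_{m_T}$.
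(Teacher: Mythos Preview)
Your proof is correct and follows essentially the same route as the paper: use Proposition \ref{PROP: quasiBfsXsubset(LpCapL1)mT} to pass from the continuous to the order-w continuous setting, invoke Theorem \ref{THM: orderwCont-pthPowerFactor-Factorization} for both the factorization and the optimality, and then apply Proposition \ref{PROP: quasiBfsXsubset(LpCapL1)mT} again (to $I_{m_T}$ on the $\sigma$-order continuous space $L^p(m_T)\cap L^1(m_T)$) to return to the continuous setting. The only minor imprecision is the phrase ``Theorem \ref{THM: orderwCont-pthPowerFactor-Factorization} applied to $I_{m_T}$'': strictly speaking that theorem is applied to $T$, and its \emph{conclusion} is the statement about $I_{m_T}$ that you then feed into Proposition \ref{PROP: quasiBfsXsubset(LpCapL1)mT}.
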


\begin{proof}
From Proposition \ref{PROP: quasiBfsXsubset(LpCapL1)mT} we have that
$T$ is $p$-th power factorable with an order-w continuous extension.
Then, from Theorem \ref{THM:
orderwCont-pthPowerFactor-Factorization}, the factorization
\eqref{EQ: Cont-pthPowerFactor-Factorization1} holds and
$I_{m_T}\colon L^p(m_T)\cap L^1(m_T)\to E$ is $p$-th power
factorable with an order-w continuous extension. Noting that the
space $L^p(m_T)\cap L^1(m_T)$ is $\sigma$-order continuous (as
$L^1(m_T)$ is so) and satisfies the $\sigma$-property (as $X(\mu)$
does), from Proposition \ref{PROP: quasiBfsXsubset(LpCapL1)mT} it
follows that $I_{m_T}\colon L^p(m_T)\cap L^1(m_T)\to E$ is $p$-th
power factorable with a continuous extension.

Let $Z(\xi)$ satisfy \eqref{EQ: Cont-pthPowerFactor-Factorization2},
in particular it satisfies the $\sigma$-property. Again Proposition
\ref{PROP: quasiBfsXsubset(LpCapL1)mT} gives that $S$ is $p$-th
power factorable with an order-w continuous extension. So, from
Theorem \ref{THM: orderwCont-pthPowerFactor-Factorization}, it
follows that $[i]\colon Z(\xi)\to L^p(m_T)\cap L^1(m_T)$ is well
defined and $S=I_{m_T}\circ[i]$.
\end{proof}

\begin{corollary}\label{COR: Cont-pthPowerFactor-Factorization}
Suppose that $X(\mu)$ is $\sigma$-order continuous and $T$ is $p$-th
power factorable with a continuous extension. Then $L^p(m_T)\cap
L^1(m_T)$ is the largest $\sigma$-order continuous quasi-B.f.s.\ to
which $T$ can be extended as a $p$-th power factorable operator with
a continuous extension, still with values in $E$. Moreover, the
extension of $T$ to $L^p(m_T)\cap L^1(m_T)$ is given by the
integration operator $I_{m_T}$.
\end{corollary}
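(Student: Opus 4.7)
The plan is to derive this corollary directly from Theorem \ref{THM: Cont-pthPowerFactor-Factorization}, reinterpreting the factorization as a statement about extensions and embeddings. No new technical work is required; the content of the corollary is essentially a translation of the theorem into the vocabulary of optimal domains.

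First I would verify that $L^p(m_T)\cap L^1(m_T)$ is in fact a candidate extension space. By Proposition \ref{PROP: X+Y-qBfs} and the discussion after it, $L^p(m_T)\cap L^1(m_T)$ is a $\sigma$-order continuous quasi-B.f.s., and it satisfies the $\sigma$-property inherited from $X(\mu)$ through the well-defined inclusion $[i]$ provided by Theorem \ref{THM: Cont-pthPowerFactor-Factorization}. The factorization \eqref{EQ: Cont-pthPowerFactor-Factorization1} exhibits $I_{m_T}$ as a linear map defined on this space with $T=I_{m_T}\circ[i]$, so $I_{m_T}$ is a genuine extension of $T$. The theorem also asserts that $I_{m_T}$ is $p$-th power factorable with a continuous extension, so the extension has the required property.

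Second, for maximality, I would take an arbitrary $\sigma$-order continuous quasi-B.f.s.\ $Z(\xi)$ with $\xi\ll\mu$ into which $T$ extends as some operator $S\colon Z(\xi)\to E$ that is $p$-th power factorable with a continuous extension. The assumption that $S$ extends $T$ means that the inclusion $[i]\colon X(\mu)\to Z(\xi)$ is well defined and $T=S\circ[i]$, which is exactly diagram \eqref{EQ: Cont-pthPowerFactor-Factorization2}. The optimality conclusion of Theorem \ref{THM: Cont-pthPowerFactor-Factorization} then delivers $[i]\colon Z(\xi)\to L^p(m_T)\cap L^1(m_T)$ well defined and $S=I_{m_T}\circ[i]$. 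This realises $Z(\xi)$ as a subspace of $L^p(m_T)\cap L^1(m_T)$ on which the action of $S$ coincides with that of $I_{m_T}$, establishing both that $L^p(m_T)\cap L^1(m_T)$ is the largest such domain and that the extension of $T$ to it is $I_{m_T}$.

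There is no genuine obstacle: every nontrivial step (construction of $m_T$, the equivalence between $p$-th power factorability with a continuous extension and the inclusion $[i]\colon X(\mu)\to L^p(m_T)\cap L^1(m_T)$, and the transfer to the broader space $Z(\xi)$) has already been carried out in Propositions \ref{PROP: XsubsetL1mT}, \ref{PROP: Xsubset(LpCapL1)mT} and \ref{PROP: quasiBfsXsubset(LpCapL1)mT} together with Theorem \ref{THM: Cont-pthPowerFactor-Factorization}. The only thing to be careful about is the logical direction: one must phrase \emph{extension} in terms of $[i]$ being well defined and $T=S\circ[i]$, which is exactly the hypothesis required to trigger the optimality clause of the theorem.
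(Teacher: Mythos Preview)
Your proposal is correct and matches the paper's treatment: the corollary is stated without proof, as it is simply the optimality clause of Theorem~\ref{THM: Cont-pthPowerFactor-Factorization} rephrased in the language of optimal domains. One small slip: the fact that $L^p(m_T)\cap L^1(m_T)$ is a $\sigma$-order continuous quasi-B.f.s.\ is recorded in the paragraph \emph{before} Proposition~\ref{PROP: X+Y-qBfs} (which concerns sums, not intersections), though the relevant facts are also recalled inside the proof of Theorem~\ref{THM: Cont-pthPowerFactor-Factorization}, so the argument stands.
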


In the case when $\mu$ is finite, $\chi_\Omega\in X(\mu)$ and
$p\ge1$, Corollary \ref{COR: Cont-pthPowerFactor-Factorization} is
proved in \cite[Theorem 5.11]{okada-ricker-sanchezperez}.


\section{Application: extension for operators defined on $\ell^1$}
\label{SEC: Applications}

Consider the measure space $(\mathbb{N},\mathcal{P}(\mathbb{N}),c)$
where $c$ is the counting measure on $\mathbb{N}$. Note that a
property holds $c$-a.e.\ if and only if it holds pointwise and that
the space $L^0(c)$ coincides with the space $\ell^0$ of all real
sequences. Consider the space $\ell^1=L^1(c)$, which is
$\sigma$-order continuous and has the $\sigma$-property. The
$\delta$-ring $\mathcal{P}(\mathbb{N})_{\ell^1}$ is just the set
$\mathcal{P}_F(\mathbb{N})$ of all finite subsets of $\mathbb{N}$.

Let $T\colon\ell^1\to E$ be a continuous linear operator with values
in a Banach space $E$. Denote $e_n=\chi_{\{n\}}$ and assume that
$T(e_n)\not=0$ for all $n$. This assumption seems to be natural
since if $T(e_n)=0$ then the $n$-th coordinate is not involved in
the action of $T$. Hence, the vector measure
$m_T\colon\mathcal{P}_F(\mathbb{N})\to E$ associated to $T$ by
$m_T(A)=T(\chi_A)$ is equivalent to $c$ and so
$L^1(m_T)\subset\ell^0$. We will write $\ell^1(m_T)=L^1(m_T)$.

\begin{remark}\label{REM: l1(mT)-OptimalDomain}
By Theorem \ref{THM: ContinuousFactorization} we have that $T$
can be extended as
$$
\xymatrix{
\ell^1 \ar[rr]^{T} \ar@{.>}[dr]_(.4){i} & & E\\
& \ell^1(m_T) \ar@{.>}[ur]_(.55){I_{m_T}}}
$$
and $\ell^1(m_T)$ is the largest $\sigma$-order continuous
quasi-B.f.s.\ to which $T$ can be extended as a continuous operator.
\end{remark}

Let $p>1$. We have that $T$ is $\frac{1}{p}$-th power factorable
with a continuous extension if there is an extension $S$ as
$$
\xymatrix{
\ell^1 \ar[rr]^{T} \ar@{.>}[dr]_(0.4){i} & & E\\
& \ell^p \ar@{.>}[ur]_(0.55){S}}
$$
with $S$ being a continuous linear operator. Note that $p\le1$ is
not considered as in this case $\ell^p\subset\ell^1$ and so the
extension of $T$ to the sum $\ell^p+\ell^1$ is just the same
operator $T$. Applying Proposition \ref{PROP:
quasiBfsXsubset(LpCapL1)mT} in the context of this section we obtain
the following result.

\begin{proposition}\label{PROP: T:l1->E}
The following statements are equivalent:
\begin{itemize}\setlength{\leftskip}{-3ex}\setlength{\itemsep}{1ex}
\item[(a)] $T$ is $\frac{1}{p}$-th power factorable with a continuous extension.

\item[(b)] $\ell^p\subset\ell^1(m_T)$.

\item[(c)] $\ell^1\subset\ell^{\frac{1}{p}}(m_T)\cap\ell^1(m_T)$.

\item[(d)] There exists $C>0$ such that
$$
\Big\Vert\sum_{j\in M}x_jT(e_j)\Big\Vert_E\le C\Big(\sum_{j\in
M}x_j^p\Big)^{\frac{1}{p}}
$$
for all $M\in\mathcal{P}_F(\mathbb{N})$ and $(x_j)_{j\in M}\subset
[0,\infty)$.
\end{itemize}
\end{proposition}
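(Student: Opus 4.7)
The plan is to apply Proposition \ref{PROP: quasiBfsXsubset(LpCapL1)mT} to the concrete setting where $X(\mu)=\ell^{1}$, taking the parameter of that general result to be $1/p$ (so that \emph{$p$-th power factorable} there becomes our \emph{$\frac{1}{p}$-th power factorable}). The hypotheses of that Proposition hold: $\ell^{1}$ is a $\sigma$-order continuous B.f.s.\ enjoying the $\sigma$-property, and $T$, being continuous on a $\sigma$-order continuous quasi-B.f.s., is automatically order-w continuous; hence the \textquotedblleft order-w continuous\textquotedblright\ clauses appearing in the cited Proposition come for free here.

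The next step is to identify the abstract spaces. A direct computation gives $(\ell^{1})^{1/(1/p)}=(\ell^{1})^{p}=\ell^{p}$; and since $p>1$ implies $\ell^{1}\subset\ell^{p}$, the sum $\ell^{p}+\ell^{1}$ equals $\ell^{p}$, with $\Vert\cdot\Vert_{\ell^{p}+\ell^{1}}$ coinciding with $\Vert\cdot\Vert_{p}$ (the upper bound uses the trivial decomposition $f=f+0$; the lower bound follows from $\Vert\cdot\Vert_{p}\le\Vert\cdot\Vert_{1}$ applied to each summand in any representation $f=f_1+f_2$). Under these identifications, the general statements (a), (c), (d) of Proposition \ref{PROP: quasiBfsXsubset(LpCapL1)mT} translate verbatim into our (a), (b), (c), while general (e) becomes: there exists $C>0$ such that $\Vert T(f)\Vert_{E}\le C\Vert f\Vert_{p}$ for every $f\in\ell^{1}$.

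What remains is to show that this last inequality is equivalent to the explicit condition (d). The implication from general (e) to our (d) is immediate by specializing to $f=\sum_{j\in M}x_{j}e_{j}$. For the converse, assuming (d) I approximate a nonnegative $f\in\ell^{1}$ by its truncations $f_{n}=f\chi_{\{1,\dots,n\}}$, use $\sigma$-order continuity of $\ell^{1}$ to get $f_{n}\to f$ in $\ell^{1}$ and hence $T(f_{n})\to T(f)$ by continuity of $T$, apply (d) to each $f_{n}$ to bound $\Vert T(f_{n})\Vert_{E}\le C(\sum_{j=1}^{n}f_{j}^{p})^{1/p}\le C\Vert f\Vert_{p}$, and pass to the limit; for general $f\in\ell^{1}$ I split $f=f^{+}-f^{-}$ to conclude $\Vert T(f)\Vert_{E}\le 2C\Vert f\Vert_{p}$. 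The main (modest) obstacle is precisely this approximation step --- it is the only point where the argument is not pure bookkeeping from the cited Proposition.
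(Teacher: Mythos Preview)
Your proof is correct and follows essentially the same route as the paper's own proof: both invoke Proposition \ref{PROP: quasiBfsXsubset(LpCapL1)mT} to reduce everything to showing that (d) is equivalent to the inequality $\Vert T(x)\Vert_E\le C\Vert x\Vert_{\ell^p}$ for all $x\in\ell^1$, and both establish that equivalence by specializing in one direction and, in the other, approximating a nonnegative $x\in\ell^1$ by its finite truncations $\sum_{j=1}^k x_je_j$, using $\sigma$-order continuity of $\ell^1$ and continuity of $T$, then splitting into positive and negative parts. Your explicit verification that $\Vert\cdot\Vert_{\ell^p+\ell^1}=\Vert\cdot\Vert_p$ is a welcome detail that the paper leaves implicit.
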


\begin{proof}
From Proposition \ref{PROP: quasiBfsXsubset(LpCapL1)mT}, we only
have to prove that condition (d) is equivalent to the following
condition:
\begin{itemize}\setlength{\leftskip}{-2ex}\setlength{\itemsep}{1ex}
\item[(d')] There exists $C>0$ such that $\Vert T(x)\Vert_E\le C\,\Vert x\Vert_{\ell^p}$
for all $x\in\ell^1$.
\end{itemize}

If (d') holds, we obtain (d) by taking in (d') the element
$x=\sum_{j\in M}x_je_j\in\ell^1$ for every
$M\in\mathcal{P}_F(\mathbb{N})$ and $(x_j)_{j\in M}\subset
[0,\infty)$.

Suppose that (d) holds. Let $0\le x=(x_n)\in\ell^1$ and take
$y^k=\sum_{j=1}^kx_je_j$. Since $y^k\uparrow x$ pointwise, $\ell^1$
is $\sigma$-order continuous and $T$ is continuous, we have that
$$
\Vert T(x)\Vert_E=\lim\Vert
T(y^k)\Vert_E=\lim\Big\Vert\sum_{j=1}^kx_jT(e_j)\Big\Vert_E\le
C\,\lim\Big(\sum_{j=1}^kx_j^p\Big)^{\frac{1}{p}}=C\,\Vert
x\Vert_{\ell^p}.
$$
For a general $x\in\ell^1$, (d') follows by taking the positive and
negative parts of $x$.
\end{proof}

\begin{remark}\label{REM: l(1/p)(mT)Capl1(mT)-OptimalDomain}
Note that if $T$ is $\frac{1}{p}$-th power factorable with a
continuous extension then the integration operator $I_{m_T}$ extends
$T$ to $\ell^p$ and, from Theorem \ref{THM:
Cont-pthPowerFactor-Factorization}, $T$ factors optimally as
$$
\xymatrix{
\ell^1 \ar[rr]^{T} \ar@{.>}[dr]_(.35){i} & & E\\
& \ell^{\frac{1}{p}}(m_T)\cap\ell^1(m_T) \ar@{.>}[ur]_(.6){I_{m_T}}}
$$
with $I_{m_T}$ being $\frac{1}{p}$-th power factorable with a
continuous extension.
\end{remark}

Now a natural question arises: When
$\ell^{\frac{1}{p}}(m_T)\cap\ell^1(m_T)$ is equal to
$\ell^{\frac{1}{p}}(m_T)$ or $\ell^1(m_T)$? For asking this question
we introduce the following class of operators.

Let $0<r<\infty$. We say that $T$ is \emph{$r$-power dominated} if
there exists $C>0$ such that
$$
\Big\Vert\sum_{j\in M}x_j^r\,T(e_j)\Big\Vert_E^{\frac{1}{r}}\le
C\sup_{N\subset M}\Big\Vert\sum_{j\in N}x_jT(e_j)\Big\Vert_E
$$
for every $M\in\mathcal{P}_F(\mathbb{N})$ and $(x_j)_{j\in
M}\in[0,\infty)$. Note that in the case when $E$ is a Banach lattice
and $T$ is positive we have that
$$
\sup_{N\subset M}\Big\Vert\sum_{j\in
N}x_jT(e_j)\Big\Vert_E=\Big\Vert\sum_{j\in M}x_jT(e_j)\Big\Vert_E.
$$

\begin{lemma}\label{LEM: l1(mT)subsetlr(mT)}
The containment $\ell^1(m_T)\subset\ell^r(m_T)$ holds if and only if
$T$ is $r$-power dominated.
\end{lemma}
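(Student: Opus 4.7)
The proof splits into two implications, and in both directions the bridge is the norm equivalence \eqref{EQ: L1(m)-norm}: for a simple function $f=\sum_{j\in M}a_j e_j$ supported in a finite set $M$, one has $I_{m_T}(f\chi_N)=\sum_{j\in M\cap N}a_j T(e_j)$, so \eqref{EQ: L1(m)-norm} translates directly between $\Vert f\Vert_{\ell^1(m_T)}$ and the finite-sum quantities $\Vert\sum_{j\in N}a_j T(e_j)\Vert_E$ that appear in the $r$-power domination condition. In particular, $\Vert I_{m_T}(f)\Vert_E\le\Vert f\Vert_{\ell^1(m_T)}$ (upper bound, taking $A=M$) and $\Vert f\Vert_{\ell^1(m_T)}\le 2\sup_{N\subset M}\Vert\sum_{j\in N}a_j T(e_j)\Vert_E$ (lower bound).

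For the forward implication, I would first observe that the positive inclusion $[i]\colon\ell^1(m_T)\to\ell^r(m_T)$ is automatically continuous, since positive maps between quasi-Banach lattices are continuous (as recalled in Section \ref{SEC: Preliminaries}). This yields $K>0$ with $\Vert\cdot\Vert_{\ell^r(m_T)}\le K\Vert\cdot\Vert_{\ell^1(m_T)}$. Applying this to $x=\sum_{j\in M}x_j e_j$, the definition $\Vert x\Vert_{\ell^r(m_T)}^r=\Vert|x|^r\Vert_{\ell^1(m_T)}$ together with the upper half of \eqref{EQ: L1(m)-norm} gives $\Vert\sum_{j\in M}x_j^r T(e_j)\Vert_E\le\Vert x\Vert_{\ell^r(m_T)}^r$, while the lower half applied to $x$ gives $\Vert x\Vert_{\ell^1(m_T)}\le 2\sup_{N\subset M}\Vert\sum_{j\in N}x_j T(e_j)\Vert_E$. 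Chaining these inequalities and extracting an $r$-th root yields $r$-power domination with constant $2K$.

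For the converse, assume $T$ is $r$-power dominated with constant $C$, take $0\le x=(x_j)\in\ell^1(m_T)$, and aim to show $|x|^r\in\ell^1(m_T)$. I would work with the simple-function truncations $f_k=x\chi_{\{1,\ldots,k\}}$ and $g_k=|x|^r\chi_{\{1,\ldots,k\}}=\sum_{j=1}^k x_j^r e_j$, both in $\ell^1(m_T)$. Since $\ell^1(m_T)$ is $\sigma$-order continuous and $x-f_k\downarrow 0$ pointwise, $(f_k)$ is Cauchy. The crucial step is to show $(g_k)$ is Cauchy as well: for $k<l$, the lower half of \eqref{EQ: L1(m)-norm} applied to $g_l-g_k$ bounds $\Vert g_l-g_k\Vert_{\ell^1(m_T)}$ by $2\sup_{N\subset\{k+1,\ldots,l\}}\Vert\sum_{j\in N}x_j^r T(e_j)\Vert_E$; the $r$-power domination hypothesis applied to each such $N$ bounds this by $2C^r(\sup_{N\subset\{k+1,\ldots,l\}}\Vert\sum_{j\in N}x_j T(e_j)\Vert_E)^r$; and the upper half of \eqref{EQ: L1(m)-norm} applied to $f_l-f_k$ bounds the inner supremum by $\Vert f_l-f_k\Vert_{\ell^1(m_T)}$. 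Thus
$$
\Vert g_l-g_k\Vert_{\ell^1(m_T)}\le 2C^r\,\Vert f_l-f_k\Vert_{\ell^1(m_T)}^r,
$$
so $(g_k)$ is Cauchy. By completeness, $g_k\to g$ in $\ell^1(m_T)$; extracting a pointwise convergent subsequence and using $g_k\uparrow|x|^r$ identifies $g=|x|^r$, so $|x|^r\in\ell^1(m_T)$ and $x\in\ell^r(m_T)$.

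The main obstacle is this last step: upgrading the finite-set inequality supplied by $r$-power domination to genuine membership in $\ell^1(m_T)$. The natural route—Fatou-type arguments for $L^1$ of a vector measure—is available but delicate; the cleaner alternative I have sketched bypasses it by exploiting $\sigma$-order continuity to obtain Cauchy-ness of $(f_k)$ for free, and then letting the $r$-power domination hypothesis (which is designed precisely to control $r$-th powers by linear partial sums) transfer the Cauchy property from $(f_k)$ to $(g_k)$.
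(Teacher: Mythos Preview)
Your proof is correct and follows essentially the same route as the paper's: in both directions you use the norm equivalence \eqref{EQ: L1(m)-norm} to pass between $\ell^1(m_T)$-norms and finite partial sums, and in the converse you use $\sigma$-order continuity of $\ell^1(m_T)$ to make the truncations of $|x|$ Cauchy and then transfer this to the truncations of $|x|^r$ via the $r$-power domination inequality. Your sequences $f_k,g_k$ are exactly the paper's $(y^k)^{1/r},y^k$, and the resulting Cauchy estimate $\Vert g_l-g_k\Vert_{\ell^1(m_T)}\le 2C^r\Vert f_l-f_k\Vert_{\ell^1(m_T)}^r$ is the same as the paper's, obtained by the same chain of inequalities.
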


\begin{proof}
Suppose that $\ell^1(m_T)\subset\ell^r(m_T)$. Since the containment
is continuous (as it is positive), there exists $C>0$ such that
$\Vert x\Vert_{\ell^r(m_T)}\le C\,\Vert x\Vert_{\ell^1(m_T)}$ for
all $x\in\ell^1(m_T)$. For every $M\in \mathcal{P}_F(\mathbb{N})$
and $(x_j)_{j\in M}\in[0,\infty)$, we consider $x=\sum_{j\in
M}x_je_j\in\ell^1$. Noting that $x^r=\sum_{j\in
M}x_j^re_j\in\ell^1$, it follows that
\begin{eqnarray*}
\Big\Vert\sum_{j\in M}x_j^r\,T(e_j)\Big\Vert_E^{\frac{1}{r}} & = &
\Vert T(x^r)\Vert_E^{\frac{1}{r}}=\Vert
I_{m_T}(x^r)\Vert_E^{\frac{1}{r}}\le\Vert
x^r\Vert_{\ell^1(m_T)}^{\frac{1}{r}}=\Vert x\Vert_{\ell^r(m_T)}
\\ & \le & C\,\Vert x\Vert_{\ell^1(m_T)}\le
2C\sup_{A\in\mathcal{P}_F(\mathbb{N})}\Vert I_{m_T}(x\chi_A)\Vert_E,
\end{eqnarray*}
where in the last inequality we have used \eqref{EQ: L1(m)-norm}.
For every $A\in\mathcal{P}_F(\mathbb{N})$ we have that
$x\chi_A=\sum_{j\in A\cap M}x_je_j\in\ell^1$ and so
$I_{m_T}(x\chi_A)=T(x\chi_A)=\sum_{j\in A\cap M}x_jT(e_j)$. Then,
\begin{eqnarray*}
\Big\Vert\sum_{j\in M}x_j^r\,T(e_j)\Big\Vert_E^{\frac{1}{r}} & \le &
2C\sup_{A\in\mathcal{P}_F(\mathbb{N})}\Big\Vert\sum_{j\in A\cap
M}x_jT(e_j)\Big\Vert_E \\ & = & 2C\sup_{N\subset
M}\Big\Vert\sum_{j\in N}x_jT(e_j)\Big\Vert_E.
\end{eqnarray*}

Conversely, suppose that $T$ is $r$-power dominated and let
$x=(x_n)\in\ell^1(m_T)$. Taking
$y^k=\sum_{j=1}^k|x_j|^re_j\in\ell^1$, for every $k>\tilde{k}$ and
$A\in\mathcal{P}_F(\mathbb{N})$, we have that
$(y^k-y^{\tilde{k}})\chi_A=\sum_{j\in
A\cap\{\tilde{k}+1,...,k\}}|x_j|^re_j$ and so
\begin{eqnarray*}
\big\Vert T\big((y^k-y^{\tilde{k}})\chi_A\big)\big\Vert_E & = &
\Big\Vert \sum_{j\in
A\cap\{\tilde{k}+1,...,k\}}|x_j|^r\,T(e_j)\Big\Vert_E \\ & \le &
C^{\,r}\sup_{N\subset A\cap\{\tilde{k}+1,...,k\}}\Big\Vert\sum_{j\in
N}|x_j|T(e_j)\Big\Vert_E^r \\ & = & C^{\,r}\sup_{N\subset
A\cap\{\tilde{k}+1,...,k\}}\Big\Vert\,I_{m_T}\Big(\sum_{j\in
N}|x_j|e_j\Big)\Big\Vert_E^r \\ & \le & C^{\,r}\sup_{N\subset
A\cap\{\tilde{k}+1,...,k\}}\Big\Vert\sum_{j\in
N}|x_j|e_j\Big\Vert_{\ell^1(m_T)}^r \\ & \le &
C^{\,r}\big\Vert(y^k)^{\frac{1}{r}}-(y^{\tilde{k}})^{\frac{1}{r}}
\big\Vert_{\ell^1(m_T)}^r.
\end{eqnarray*}
For the last inequality note that
$(y^k)^{\frac{1}{r}}=\sum_{j=1}^k|x_j|e_j$ and so
$$
\sum_{j\in N}|x_j|e_j\le\sum_{j=\tilde{k}+1
}^k|x_j|e_j=(y^k)^{\frac{1}{r}}-(y^{\tilde{k}})^{\frac{1}{r}}
$$
for every $N\subset A\cap\{\tilde{k}+1,...,k\}$. Then, by using
\eqref{EQ: L1(m)-norm}, we have that
\begin{eqnarray*}
\big\Vert y^k-y^{\tilde{k}}\big\Vert_{\ell^1(m_T)} & \le &
2\sup_{A\in\mathcal{P}_F(\mathbb{N})}\big\Vert
I_{m_T}\big((y^k-y^{\tilde{k}})\chi_A\big)\big\Vert_E \\ & = &
2\sup_{A\in\mathcal{P}_F(\mathbb{N})}\big\Vert
T\big((y^k-y^{\tilde{k}})\chi_A\big)\big\Vert_E \\ & \le &
2C^{\,r}\big\Vert(y^k)^{\frac{1}{r}}-(y^{\tilde{k}})^{\frac{1}{r}}
\big\Vert_{\ell^1(m_T)}^r\to0
\end{eqnarray*}
as $k,\tilde{k}\to\infty$ since $(y^k)^{\frac{1}{r}}\uparrow|x|$
pointwise and $\ell^1(m_T)$ is $\sigma$-order continuous. Hence,
$y^k\to z$ in $\ell^1(m_T)$ for some $z\in\ell^1(m_T)$. In
particular, $y^k\to z$ pointwise and so $|x|^r=z\in\ell^1(m_T)$ as
$y^k\uparrow|x|^r$ pointwise. Therefore $x\in\ell^r(m_T)$.
\end{proof}

\begin{lemma}\label{LEM: rPowerDominated-rthFactorable}
Let $p>1$. If $T$ is $\frac{1}{p}$-power dominated then it is
$\frac{1}{p}$-th power factorable with a continuous extension.
\end{lemma}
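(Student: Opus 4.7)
The plan is to reduce the lemma, via Proposition \ref{PROP: T:l1->E}, to verifying condition (d) of that proposition. Once I produce a constant $K>0$ with
$$
\Big\Vert\sum_{j\in M}z_j\,T(e_j)\Big\Vert_E \le K\Big(\sum_{j\in M} z_j^p\Big)^{1/p}
$$
for every $M\in\mathcal{P}_F(\mathbb{N})$ and every non-negative $(z_j)_{j\in M}$, the implication (d)$\Rightarrow$(a) from Proposition \ref{PROP: T:l1->E} immediately yields that $T$ is $\frac{1}{p}$-th power factorable with a continuous extension.

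To produce such a bound, I would apply the $\frac{1}{p}$-power dominated hypothesis (i.e.\ the defining inequality with $r=\frac{1}{p}$) and make the change of variables $x_j=z_j^p$. This turns
$$
\Big\Vert\sum_{j\in M} x_j^{1/p}\,T(e_j)\Big\Vert_E^{p}\le C\sup_{N\subset M}\Big\Vert\sum_{j\in N} x_j\,T(e_j)\Big\Vert_E
$$
into
$$
\Big\Vert\sum_{j\in M} z_j\,T(e_j)\Big\Vert_E^{p}\le C\sup_{N\subset M}\Big\Vert\sum_{j\in N} z_j^{p}\,T(e_j)\Big\Vert_E,
$$
so the left-hand side is already in the desired form and only the supremum on the right remains to be controlled.

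For that supremum I would use the continuity of the original operator $T\colon\ell^1\to E$. For any finite $N\subset M$, the vector $y_N:=\sum_{j\in N} z_j^{p}\,e_j$ belongs to $\ell^1$ with $\Vert y_N\Vert_{\ell^1}=\sum_{j\in N} z_j^{p}\le\sum_{j\in M} z_j^{p}$, and therefore
$$
\Big\Vert\sum_{j\in N} z_j^{p}\,T(e_j)\Big\Vert_E=\Vert T(y_N)\Vert_E\le\Vert T\Vert\sum_{j\in M} z_j^{p}.
$$
Taking the supremum over $N\subset M$ and inserting the result into the previous inequality gives $\Vert\sum_{j\in M} z_j\,T(e_j)\Vert_E^{p}\le C\Vert T\Vert\sum_{j\in M} z_j^{p}$, which is condition (d) with $K=(C\Vert T\Vert)^{1/p}$.

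I do not foresee any serious obstacle: the argument is essentially an algebraic substitution in the domination inequality, followed by an appeal to the $\ell^1$-continuity of $T$, and a final invocation of Proposition \ref{PROP: T:l1->E}. The only thing to keep in mind is that the substitution $x_j=z_j^{p}$ is legitimate because $(z_j)_{j\in M}$ ranges over arbitrary non-negative finite tuples, so $(x_j)$ does too.
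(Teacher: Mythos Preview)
Your proposal is correct and follows essentially the same route as the paper: both arguments verify condition (d) of Proposition \ref{PROP: T:l1->E} by applying the $\frac{1}{p}$-power domination inequality (the paper rewrites $x_j=(x_j^p)^{1/p}$ where you substitute $x_j=z_j^p$, which is the same manipulation) and then bounding the resulting supremum via the continuity of $T\colon\ell^1\to E$, arriving at the constant $(C\Vert T\Vert)^{1/p}$.
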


\begin{proof}
Let us use Proposition \ref{PROP: T:l1->E}.(d). Given
$M\in\mathcal{P}_F(\mathbb{N})$ and $(x_j)_{j\in M}\subset
[0,\infty)$, denoting by $K$ the continuity constant of $T$, we have
that
\begin{eqnarray*}
\Big\Vert\sum_{j\in M}x_jT(e_j)\Big\Vert_E & = & \Big\Vert\sum_{j\in
M}(x_j^p)^{\frac{1}{p}}T(e_j)\Big\Vert_E\le
C^{\frac{1}{p}}\sup_{N\subset M}\Big\Vert\sum_{j\in
N}x_j^p\,T(e_j)\Big\Vert_E^{\frac{1}{p}} \\ & = &
C^{\frac{1}{p}}\sup_{N\subset M}\Big\Vert T\Big(\sum_{j\in
N}x_j^pe_j\Big)\Big\Vert_E^{\frac{1}{p}}\le
C^{\frac{1}{p}}K^{\frac{1}{p}}\sup_{N\subset
M}\Big\Vert\sum_{j\in N}x_j^pe_j\Big\Vert_{\ell^1}^{\frac{1}{p}} \\
& = & C^{\frac{1}{p}}K^{\frac{1}{p}}\sup_{N\subset M}\Big(\sum_{j\in
N}x_j^p\Big)^{\frac{1}{p}}\le
C^{\frac{1}{p}}K^{\frac{1}{p}}\Big(\sum_{j\in
M}x_j^p\Big)^{\frac{1}{p}}.
\end{eqnarray*}
\end{proof}

As a consequence of Remark \ref{REM:
l(1/p)(mT)Capl1(mT)-OptimalDomain}, Lemma \ref{LEM:
l1(mT)subsetlr(mT)} and Lemma \ref{LEM:
rPowerDominated-rthFactorable}, we obtain the following conclusion.

\begin{corollary}
For $p>1$ we have that:
\begin{itemize}\setlength{\leftskip}{-3ex}\setlength{\itemsep}{1ex}
\item[(a)] If $T$ is $p$-power dominated and $\frac{1}{p}$-th power
factorable with a continuous extension, then $T$ factors optimally
as
$$
\xymatrix{
\ell^1 \ar[rr]^{T} \ar@{.>}[dr]_(.35){i} & & E\\
& \ell^{\frac{1}{p}}(m_T) \ar@{.>}[ur]_(.6){I_{m_T}}}
$$
with $I_{m_T}$ being $\frac{1}{p}$-th power factorable with a
continuous extension.

\item[(b)] If $T$ is $\frac{1}{p}$-power dominated, then $T$ factors optimally
as
$$
\xymatrix{
\ell^1 \ar[rr]^{T} \ar@{.>}[dr]_(.35){i} & & E\\
& \ell^1(m_T) \ar@{.>}[ur]_(.6){I_{m_T}}}
$$
with $I_{m_T}$ being $\frac{1}{p}$-th power factorable with a
continuous extension.
\end{itemize}
\end{corollary}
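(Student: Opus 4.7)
The plan is to reduce both statements to Remark~\ref{REM: l(1/p)(mT)Capl1(mT)-OptimalDomain}, which already guarantees the optimal factorization through $\ell^{1/p}(m_T)\cap\ell^1(m_T)$ whenever $T$ is $\tfrac{1}{p}$-th power factorable with a continuous extension. The task therefore reduces to (i) confirming this hypothesis in each case, and (ii) identifying the intersection $\ell^{1/p}(m_T)\cap\ell^1(m_T)$ with the space claimed in each item.

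For part (a) the $\tfrac{1}{p}$-th power factorability with a continuous extension is already an assumption, so only the identification of the intersection remains. Since $T$ is $p$-power dominated, Lemma~\ref{LEM: l1(mT)subsetlr(mT)} applied with $r=p$ yields $\ell^1(m_T)\subset\ell^p(m_T)$. Given $x\in\ell^{1/p}(m_T)$, the definition of the $\tfrac{1}{p}$-th power says $|x|^{1/p}\in\ell^1(m_T)$, and the previous containment then places $|x|^{1/p}\in\ell^p(m_T)$, i.e.\ $|x|\in\ell^1(m_T)$. Hence $\ell^{1/p}(m_T)\subset\ell^1(m_T)$, the intersection collapses to $\ell^{1/p}(m_T)$, and Remark~\ref{REM: l(1/p)(mT)Capl1(mT)-OptimalDomain} delivers the optimal factorization through $\ell^{1/p}(m_T)$.

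For part (b), Lemma~\ref{LEM: rPowerDominated-rthFactorable} immediately provides the $\tfrac{1}{p}$-th power factorability with a continuous extension from $\tfrac{1}{p}$-power dominance, so the remark is applicable. Applying Lemma~\ref{LEM: l1(mT)subsetlr(mT)} with $r=1/p$ gives $\ell^1(m_T)\subset\ell^{1/p}(m_T)$, whence $\ell^{1/p}(m_T)\cap\ell^1(m_T)=\ell^1(m_T)$, and the optimal factorization runs through $\ell^1(m_T)$ as asserted.

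There is no genuine obstacle here: the corollary is essentially bookkeeping on top of the preceding remark and the two lemmas. The only point requiring a moment's care is the direction of the containment used in (a)---one must notice that $p$-power dominance (rather than $\tfrac{1}{p}$-power dominance) nevertheless forces $\ell^{1/p}(m_T)\subset\ell^1(m_T)$ via the $p$-th power identification, which is what makes the intersection collapse in the correct direction.
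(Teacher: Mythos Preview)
Your proof is correct and follows essentially the same route as the paper, which simply cites Remark~\ref{REM: l(1/p)(mT)Capl1(mT)-OptimalDomain}, Lemma~\ref{LEM: l1(mT)subsetlr(mT)} and Lemma~\ref{LEM: rPowerDominated-rthFactorable} without spelling out the details. The only step you made explicit beyond the paper is the chain $\ell^1(m_T)\subset\ell^p(m_T)\Rightarrow\ell^{1/p}(m_T)\subset\ell^1(m_T)$ in part~(a), which is indeed the point where the identification of the intersection requires a short argument.
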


Consider now the case when $E=\ell(c)$ is a B.f.s.\  related to $c$
such that $\ell^1\subset\ell(c)\subset\ell^0$. Then $\ell(c)$ is a
\emph{K\"{o}the function space} in the sense of Lindenstrauss and
Tzafriri, see \cite[p.\,28-30]{lindenstrauss-tzafriri}. For
instance, $\ell(c)$ could be an $\ell^q$ space with $1\le
q\le\infty$, or a Lorentz sequence space $\ell^{q,r}$ with $1\le
r\le q\le\infty$ or an Orlicz sequence space $\ell_\varphi$ with
$\varphi$ being an Orlicz function.

Let us recall some facts about the K\"{o}the dual of an space
$\ell(c)$. Denote the \emph{scalar product} of two sequences
$x=(x_n),y=(y_n)\in\ell^0$ by
$$
\big(x,y\big)=\sum x_ny_n
$$
provided the sum exists. The \emph{K\"{o}the dual} of $\ell(c)$ is
given by
$$
\ell(c)'=\Big\{y\in\ell^0:\,\big(|x|,|y|\big)<\infty \ \textnormal{
for all } x\in\ell(c)\Big\}.
$$
Note that $\chi_A\in\ell(c)'$ for all
$A\in\mathcal{P}_F(\mathbb{N})$. The space $\ell(c)'$ endowed with
the norm
$$
\Vert y\Vert_{\ell(c)'}=\sup_{x\in B_{\ell(c)}}\big(|x|,|y|\big)
$$
is a B.f.s.\ in the sense of Lindenstrauss and Tzafriri. The map
$j\colon\ell(c)'\to\ell(c)^*$ defined by $\langle
j(y),x\rangle=\big(x,y\big)$ for all $y\in\ell(c)'$ and
$x\in\ell(c)$, is a linear isometry. In particular, convergence in
norm of $\ell(c)$ implies pointwise convergence, as $e_n\in\ell(c)'$
for all $n$. Note that $\ell(c)\subset\ell(c)''$. The equality
$\ell(c)=\ell(c)''$ holds with equal norms if and only if $\ell(c)$
has the \emph{Fatou property}, that is, if $(x^k)\subset\ell(c)$ is
such that $0\le x^k\uparrow x$ pointwise and $\sup\Vert
x^k\Vert_{\ell(c)}<\infty$ then $x\in\ell(c)$ and $\Vert
x^k\Vert_{\ell(c)}\uparrow\Vert x\Vert_{\ell(c)}$.

Let $M=(a_{ij})$ be an infinite matrix of real numbers and denote by
$C_j$ the $j$-th column of $M$. Assume $C_j\not=0$ for all $j$. Note
that
$$
Mx=\Big(\sum_ja_{ij}x_j\Big)_i
$$
for any $x\in\ell^0$ for which it is meaningful to do so.

\begin{proposition}\label{PROP: M:l1->l(c)}
Suppose that $\ell(c)$ has the Fatou property. Then, the following
statements are equivalent:
\begin{itemize}\setlength{\leftskip}{-3ex}\setlength{\itemsep}{.5ex}
\item[(a)] $M$ defines a continuous linear operator
$M\colon\ell^1\to\ell(c)$.

\item[(b)] $C_j\in\ell(c)$ for all $j$ and $\sup_j\Vert C_j\Vert_{\ell(c)}<\infty$.
\end{itemize}
\end{proposition}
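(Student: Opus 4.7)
The forward direction $(a) \Rightarrow (b)$ is immediate: if $M : \ell^1 \to \ell(c)$ is continuous with norm bound $K$, then $M(e_j) = C_j$ gives $C_j \in \ell(c)$, and $\|C_j\|_{\ell(c)} = \|M(e_j)\|_{\ell(c)} \le K\|e_j\|_{\ell^1} = K$, so $\sup_j \|C_j\|_{\ell(c)} \le K$. No use of the Fatou property is needed here.

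For the converse $(b) \Rightarrow (a)$, set $K = \sup_j \|C_j\|_{\ell(c)} < \infty$. First I would check that $Mx$ is well defined pointwise for every $x \in \ell^1$. Using the isometric embedding $j \colon \ell(c)' \to \ell(c)^*$, we have $|C_j(i)| = |(C_j, e_i)| \le \|C_j\|_{\ell(c)} \|e_i\|_{\ell(c)'} \le K \|e_i\|_{\ell(c)'}$, and since $e_i \in \ell(c)'$ has finite norm, it follows that
\[
\sum_j |a_{ij}||x_j| \le K \|e_i\|_{\ell(c)'} \|x\|_{\ell^1} < \infty.
\]
So the $i$-th row sum defining $(Mx)_i$ converges absolutely.

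Next, for $0 \le x \in \ell^1$, I would consider the partial sums $z^N = \sum_{j=1}^N x_j |C_j| \in \ell(c)$, which increase pointwise to the sequence $z = \bigl(\sum_j |a_{ij}| x_j\bigr)_i$. By the triangle inequality in $\ell(c)$,
\[
\|z^N\|_{\ell(c)} \le \sum_{j=1}^N x_j \,\|C_j\|_{\ell(c)} \le K \|x\|_{\ell^1}.
\]
Now I would invoke the Fatou property of $\ell(c)$ to conclude $z \in \ell(c)$ with $\|z\|_{\ell(c)} \le K \|x\|_{\ell^1}$. Since $|Mx|_i \le z_i$ pointwise and $\ell(c)$ is an ideal function space, it follows that $Mx \in \ell(c)$ and $\|Mx\|_{\ell(c)} \le K \|x\|_{\ell^1}$.

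For arbitrary $x \in \ell^1$, decompose $x = x^+ - x^-$; linearity of the matrix action (which is clear coordinate-wise from absolute convergence) gives $Mx = Mx^+ - Mx^-$ and the norm bound $\|Mx\|_{\ell(c)} \le 2K\|x\|_{\ell^1}$, so $M$ is continuous. The only genuine difficulty is the passage to the limit establishing both membership in $\ell(c)$ and the norm estimate, which is precisely the step requiring the Fatou property.
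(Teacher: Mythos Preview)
Your proof is correct. For $(b)\Rightarrow(a)$ you take a slightly different route from the paper: you invoke the Fatou property \emph{directly} on the increasing partial sums $z^N=\sum_{j=1}^N x_j\,|C_j|$ to obtain membership and the norm bound for the majorant $z$, then use the ideal property. The paper instead uses the Fatou property via the equivalent formulation $\ell(c)=\ell(c)''$ with equal norms, and proves $Mx\in\ell(c)''$ by testing the pairing $\big(|y|,|Mx|\big)$ against arbitrary $y\in\ell(c)'$ and estimating with Fubini and $\|C_j\|_{\ell(c)}\le K$. Your approach is a bit more elementary (no K\"othe biduality needed), at the cost of the positive/negative split and the constant $2K$ rather than $K$; the paper's duality argument handles all $x\in\ell^1$ at once and gives the sharp constant. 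Both are perfectly standard.
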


\begin{proof}
(a) $\Rightarrow$ (b) Let $K>0$ be such that $\Vert
Mx\Vert_{\ell(c)}\le K\Vert x\Vert_{\ell^1}$ for all $x\in\ell^1$.
For every $j$ we have that $C_j=Me_j\in\ell(c)$. Moreover,
$$
\sup_j\Vert C_j\Vert_{\ell(c)}=\sup_j\Vert Me_j\Vert_{\ell(c)}\le
K\sup_j\Vert e_j\Vert_{\ell^1}=K.
$$

(b) $\Rightarrow$ (c) Since $\ell(c)$ has the Fatou property then
$\ell(c)=\ell(c)''$ with equal norms. Let $x\in\ell^1$. First note
that for every $i$ we have that
\begin{eqnarray*}
\sum_j|a_{ij}x_j| & = & \sum_j\big(|C_j|,e_i\big)|x_j|\le\sum_j\Vert
C_j\Vert_{\ell(c)}\Vert e_i\Vert_{\ell(c)'}|x_j| \\ & \le & \Vert
e_i\Vert_{\ell(c)'}\Vert x\Vert_{\ell^1}\sup_j\Vert
C_j\Vert_{\ell(c)}
\end{eqnarray*}
and so $Mx\in\ell^0$. Given $y\in\ell(c)'$ it follows that
\begin{eqnarray*}
\big(|y|,|Mx|\big) & = & \sum_i|y_i|\Big|\sum_ja_{ij}x_j\Big|\le
\sum_i\sum_j|a_{ij}x_jy_i|=\sum_j|x_j|\sum_i|a_{ij}y_i|
\\ & = & \sum_j|x_j|\big(|C_j|,|y|\big)
\le\sum_j|x_j|\,\Vert C_j\Vert_{\ell(c)}\Vert y\Vert_{\ell(c)'} \\ &
\le & \Vert y\Vert_{\ell(c)'}\Vert x\Vert_{\ell^1}\sup_j\Vert
C_j\Vert_{\ell(c)}.
\end{eqnarray*}
Then $Mx\in\ell(c)''=\ell(c)$ and
$$
\Vert Mx\Vert_{\ell(c)}=\sup_{y\in
B_{\ell(c)'}}\big(|y|,|Mx|\big)\le\Vert x\Vert_{\ell^1}\sup_j\Vert
C_j\Vert_{\ell(c)}.
$$
\end{proof}

In what follows assume that $\ell(c)$ has the Fatou property,
$C_j\in\ell(c)$ for all $j$ and $\sup_j\Vert
C_j\Vert_{\ell(c)}<\infty$. Then, $M$ defines a continuous linear
operator $M\colon\ell^1\to\ell(c)$ and so, by Remark \ref{REM:
l1(mT)-OptimalDomain} we have that $M$ can be extended as
$$
\xymatrix{
\ell^1 \ar[rr]^{M} \ar@{.>}[dr]_(.4){i} & & \ell(c)\\
& \ell^1(m_M) \ar@{.>}[ur]_(.55){I_{m_M}}}
$$
and $\ell^1(m_M)$ is the largest $\sigma$-order continuous
quasi-B.f.s.\ to which $M$ can be extended as a continuous operator.

\begin{remark}\label{REM: ImM=M}
For every $x\in\ell^1(m_M)$ it follows that $I_{m_M}(x)=Mx$ and so
$M$ defines a continuous linear operator
$M\colon\ell^1(m_M)\to\ell(c)$. Indeed, take $0\le
x=(x_n)\in\ell^1(m_M)$ and $x^k=\sum_{j=1}^kx_je_j\in\ell^1$. Since
$x^k\uparrow x$ pointwise and $\ell^1(m_M)$ is $\sigma$-order
continuous it follows that $x^k\to x$ in $\ell^1(m_M)$. Then, since
$M=I_{m_M}$ on $\ell^1$, we have that $Mx^k=I_{m_M}(x^k)\to
I_{m_M}(x)$ in $\ell(c)$ and so pointwise. Hence, the $i$-th
coordinate $\sum_{j=1}^ka_{ij}x_j$ of $Mx^k$ converges to the $i$-th
coordinate of $I_{m_M}(x)$ and thus $Mx=I_{m_M}(x)\in\ell(c)$. For a
general $x\in\ell^1(m_M)$, we only have to take the positive and
negative parts of $x$.
\end{remark}

From Proposition \ref{PROP: T:l1->E} applied to
$M\colon\ell^1\to\ell(c)$ and Remark \ref{REM: ImM=M} we obtain the
following conclusion.

\begin{proposition}\label{PROP: M:l1->l(c)}
The following statements are equivalent:
\begin{itemize}\setlength{\leftskip}{-3ex}\setlength{\itemsep}{1ex}
\item[(a)] $M$ defines a continuous linear operator
$M\colon\ell^p\to\ell(c)$.

\item[(b)] $M$ is $\frac{1}{p}$-th power factorable with a continuous extension.

\item[(c)] $\ell^p\subset\ell^1(m_M)$.

\item[(d)] $\ell^1\subset\ell^{\frac{1}{p}}(m_M)\cap\ell^1(m_M)$.

\item[(e)] There exists $C>0$ such that
$$
\Big\Vert\sum_{j\in M}x_jC_j\Big\Vert_{\ell(c)}\le C\Big(\sum_{j\in
M}x_j^p\Big)^{\frac{1}{p}}
$$
for all $M\in\mathcal{P}_F(\mathbb{N})$ and $(x_j)_{j\in M}\subset
[0,\infty)$.
\end{itemize}
\end{proposition}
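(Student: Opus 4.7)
The plan is to derive the whole block of equivalences as an immediate consequence of Proposition \ref{PROP: T:l1->E}, applied to the continuous linear operator $M\colon\ell^1\to\ell(c)$ that is guaranteed by the standing assumptions on $\ell(c)$ and on the columns $C_j$. Since $T(e_j)=Me_j=C_j\neq 0$ for every $j$, the operator $M$ restricted to $\ell^1$ fits exactly the hypotheses of Proposition \ref{PROP: T:l1->E}, and a direct translation of its conditions (a)--(d) yields respectively the conditions (b), (c), (d), and (e) of the present statement (note that, by substituting $T(e_j)$ by $C_j$, the estimate in condition (d) there becomes condition (e) here). This at once gives the chain $\text{(b)}\Leftrightarrow\text{(c)}\Leftrightarrow\text{(d)}\Leftrightarrow\text{(e)}$.

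Only the link of (a) with this block remains, and for this the key tool is Remark \ref{REM: ImM=M}, which says that the integration operator $I_{m_M}\colon\ell^1(m_M)\to\ell(c)$ acts as $I_{m_M}(x)=Mx$. For $\text{(a)}\Rightarrow\text{(b)}$, note that since $p>1$ we have $\ell^1\subset\ell^p$ and hence $\ell^p+\ell^1=\ell^p$, so a continuous operator $M\colon\ell^p\to\ell(c)$ is by itself a continuous linear extension of $M\colon\ell^1\to\ell(c)$ to $(\ell^1)^p+\ell^1$, proving $\tfrac{1}{p}$-th power factorability with continuous extension. For the converse $\text{(b)}\Rightarrow\text{(a)}$, the hypothesis (b) implies (c), i.e.\ $\ell^p\subset\ell^1(m_M)$, and composing with $I_{m_M}$ and invoking Remark \ref{REM: ImM=M} shows that $Mx=I_{m_M}(x)\in\ell(c)$ for every $x\in\ell^p$, with norm controlled by the operator norm of $I_{m_M}$; hence $M\colon\ell^p\to\ell(c)$ is well defined and continuous.

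No real obstacle is expected here: the whole statement is a bookkeeping reformulation of Proposition \ref{PROP: T:l1->E} for the concrete operator $M$, glued together with the identification $I_{m_M}=M$ on $\ell^1(m_M)$ provided by Remark \ref{REM: ImM=M}. The only point where care is needed is the identification $(\ell^1)^p+\ell^1=\ell^p$ coming from $\ell^1\subset\ell^p$ for $p>1$, which is what makes condition (a), \emph{continuity of $M$ as an operator from $\ell^p$}, coincide with condition (b), \emph{$\tfrac{1}{p}$-th power factorability with continuous extension}.
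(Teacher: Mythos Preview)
Your proposal is correct and follows essentially the same approach as the paper: the equivalences (b)$\Leftrightarrow$(c)$\Leftrightarrow$(d)$\Leftrightarrow$(e) come directly from Proposition~\ref{PROP: T:l1->E}, the implication (a)$\Rightarrow$(b) is immediate from $(\ell^1)^p+\ell^1=\ell^p$, and the converse is obtained via Remark~\ref{REM: ImM=M} (the paper phrases it as (c)$\Rightarrow$(a), which is the same once (b)$\Leftrightarrow$(c) is in hand).
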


\begin{proof}
The equivalence among statements (b), (c), (d), (e) is given by
Proposition \ref{PROP: T:l1->E}. The stamement (a) implies (b)
obviously. From Remark \ref{REM: ImM=M} we have that $M$ defines a
continuous linear operator $M\colon\ell^1(m_M)\to\ell(c)$, so (c)
implies (a).
\end{proof}

Let us give two conditions guaranteeing that $M$ defines a
continuous linear operator $M\colon\ell^p\to\ell(c)$:
\begin{itemize}\setlength{\leftskip}{-3ex}\setlength{\itemsep}{1ex}
\item[(I)] If $p'$ is the conjugate exponent of $p$ and $\sum\Vert
C_j\Vert_{\ell(c)}^{p'}<\infty$, then (e) in Proposition \ref{PROP:
M:l1->l(c)} holds. Indeed, for every $M\in\mathcal{P}_F(\mathbb{N})$
and $(x_j)_{j\in M}\subset [0,\infty)$ we have that
\begin{eqnarray*}
\Big\Vert\sum_{j\in M}x_jC_j\Big\Vert_{\ell(c)} & \le & \sum_{j\in
M}x_j\Vert C_j\Vert_{\ell(c)}\le\Big(\sum_{j\in
M}x_j^p\Big)^{\frac{1}{p}}\Big(\sum_{j\in M}\Vert
C_j\Vert_{\ell(c)}^{p'}\Big)^{\frac{1}{p'}} \\ & \le &
\Big(\sum\Vert
C_j\Vert_{\ell(c)}^{p'}\Big)^{\frac{1}{p'}}\Big(\sum_{j\in
M}x_j^p\Big)^{\frac{1}{p}}.
\end{eqnarray*}

\item[(II)] If $M$ is $\frac{1}{p}$-power dominated, that is,
there exists $C>0$ such that
$$
\Big\Vert\sum_{j\in M}x_j^{\frac{1}{p}}\,C_j\Big\Vert_{\ell(c)}^p\le
C\sup_{N\subset M}\Big\Vert\sum_{j\in N}x_jC_j\Big\Vert_{\ell(c)}
$$
for every $M\in\mathcal{P}_F(\mathbb{N})$ and $(x_j)_{j\in
M}\in[0,\infty)$, then (b) in Proposition \ref{PROP: M:l1->l(c)}
holds by Lemma \ref{LEM: rPowerDominated-rthFactorable}.
\end{itemize}

For instance, in the case when $\ell(c)=\ell^q$ and $a_{ij}\ge0$ for
all $i,j$, condition (II) is satisfied if $F_i\in\ell^1$ for all $i$
and $\sum\Vert F_i\Vert_{\ell^1}^q<\infty$, where $F_i$ denotes the
$i$-th file of $M$. Indeed, for every
$M\in\mathcal{P}_F(\mathbb{N})$ and $(x_j)_{j\in M}\in[0,\infty)$,
applying H\"{o}lder's inequality twice for $p$ and its conjugate
exponent $p'$, we have that
\begin{eqnarray*}
\Big\Vert\sum_{j\in M}x_j^{\frac{1}{p}}\,C_j\Big\Vert_{\ell^q}^p & =
& \Big(\sum_{i}\Big(\sum_{j\in
M}x_j^{\frac{1}{p}}\,a_{ij}\Big)^q\Big)^{\frac{p}{q}}=
\Big(\sum_{i}\Big(\sum_{j\in
M}x_j^{\frac{1}{p}}\,a_{ij}^{\frac{1}{p}}a_{ij}^{1-\frac{1}{p}}\Big)^q\Big)^{\frac{p}{q}}
\\ & \le & \Big(\sum_{i}\Big(\sum_{j\in
M}x_j\,a_{ij}\Big)^{\frac{q}{p}}\Big(\sum_{j\in
M}a_{ij}\Big)^{\frac{q}{p'}}\Big)^{\frac{p}{q}}
\\ & \le & \Big(\sum_{i}\Big(\sum_{j\in
M}x_j\,a_{ij}\Big)^q\Big)^{\frac{1}{q}}\cdot \Big(\sum_{i}
\Big(\sum_{j\in M}a_{ij}\Big)^q\Big)^{\frac{p}{qp'}} \\ & \le &
\Big\Vert\sum_{j\in M}x_j\,C_j\Big\Vert_{\ell^q}\Big(\sum_i\Vert
F_i\Vert_{\ell^1}^q\Big)^{\frac{p}{qp'}}.
\end{eqnarray*}
Note that $\sup_{N\subset M}\Big\Vert\sum_{j\in
N}x_jC_j\Big\Vert_{\ell^q}=\Big\Vert\sum_{j\in
M}x_j\,C_j\Big\Vert_{\ell^q}$ as $a_{ij}\ge0$ for all $i,j$.


\end{document}